\newtheorem {lemma} {Lemma} [section]
\newtheorem{thm}{Theorem}
\newtheorem{theorem}{Theorem}
\newtheorem{prop}[lemma]{Proposition}
\newtheorem{cor}[lemma]{Corollary}
\theoremstyle{remark}
\newcounter{nmdthmcnt}
\newcommand{\beqa}{\begin{eqnarray}}
\newcommand{\beq}{\begin{equation}}
\newcommand{\eeqa}{\end{eqnarray}}
\newcommand{\eeq}{\end{equation}}
\newcommand{\be}{\begin{equation}}
\newcommand{\ee}{\end{equation}}
\newcommand{\lb}[1]{\label{#1}}
\renewcommand{\Ref}[1]{(\ref{#1})}
\newcommand\kk{{\bf k}}
\newcommand\xx{{\bf x}}
\newcommand\yy{{\bf y}}
\newcommand\kf{\hat\kk}
\newcommand\tf{\hat\tT}
\newcommand\xf{\hat\xx}
\newcommand\yf{\hat\yy}
\newcommand\kv{\kk}
\newcommand\tv{\tT}
\newcommand\xv{\xx}
\newcommand\yv{\yy}
\newcommand\n{\nabla}
\newcommand{\HH}{\mathcal{H}}
\newcommand{\VV}{\mathcal{V}}
\newcommand{\UU}{\mathcal{U}}
\newcommand{\ta}{\tau}
\newcommand\om{\omega}
\newcommand\tT{{\bf t}}
\newcommand{\al}{\alpha}
\newcommand{\bet}{\beta}
\newcommand{\we}{\wedge}
\newcommand{\sig}{\sigma}
\newcommand{\lam}{\lambda}
\newcommand{\fr}{\frac}
\begin{document}
\title[]{On the completeness of some Bianchi type A and related K\"ahler-Einstein metrics}
\author[]{Gideon Maschler and Robert Ream}
\address{Department of Mathematics and Computer Science\\ Clark University\\ Worcester, MA }
\email{Gmaschler@clarku.edu\,,\,Rream@clarku.edu}

\maketitle
\thispagestyle{empty}
\begin{abstract}
We prove the existence of complete cohomogeneity one triaxial K\"ahler-Einstein metrics
in dimension four under an action of the Euclidean group $E(2)$. We also demonstrate local existence of Ricci flat K\"ahler metrics of a related type that are given via
generalized PDEs, and determine, under mild conditions, whether they are complete. The common framework for both metric types is a frame-dependent system of Lie bracket relations and generalized PDEs yielding a class of K\"ahler-Einstein metrics on $4$-manifolds which includes all diagonal Bianchi type A metrics.
\end{abstract}

\section{Introduction}

The study of complete curvature-distinguished metrics which are invariant under a cohomogeneity one
group action have seen significant progress in recent decades, cf. \cite{dw1}, \cite{dw2}, \cite{b}, \cite{bdw}, \cite{dhw}, \cite{w}. Much of this research centers on the case of compact
groups. A significant portion of this paper concerns non-compact groups. See \cite{f} for other work involving non-compact groups.

The class of examples we study includes four-dimensional unimodular cohomogeneity one K\"ahler-Einstein metrics, otherwise known as Bianchi type A K\"ahler-Einstein metrics. This class of metrics has been studied by mathematicians and physicists, see \cite{bgpp}, \cite{p}, \cite{d-s1}, \cite{d-s2}.

In the last two references, Dancer and Strachan prove that when the cohomogeneity one action
is under the group $SU(2)$, there exist so-called triaxial complete cohomogeneity one K\"ahler-Einstein metrics. No similar results appear to be known for non-compact unimodular groups. In one of the main results given in the second part of this paper, we demonstrate the existence of complete triaxial cohomogeneity one K\"ahler-Einstein metrics under the action of the Euclidean plane group $E(2)$.
\begin{theorem}
A cohomogeneity one $E(2)$ $4$-manifold with a discrete principal stabilizer admits complete K\"ahler-Einstein metrics, and all such solutions are classified.
\end{theorem}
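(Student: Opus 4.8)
The plan is to specialize the frame-dependent system of Lie bracket relations and generalized PDEs of the first part of the paper to the present situation, and then to integrate the resulting equations. Since the principal orbit is $E(2)/\Gamma$ with $\Gamma$ discrete, the relevant structure functions are, up to the admissible frame changes, the structure constants of $\mathfrak{e}(2)$, which is Bianchi type $VII_0$: in a suitable left-invariant frame $\{e_1,e_2,e_3\}$ one has $[e_1,e_2]=0$, $[e_3,e_1]=e_2$, $[e_3,e_2]=-e_1$. Imposing the paper's bracket relations with these constants pins down the admissible frames, and with them the invariant complex structure and the shape $g=dt^2+\sum_i a_i(t)^2(\sigma^i)^2$ of the metric on the principal part $I\times E(2)/\Gamma$; the generalized PDEs, having no transverse directions left in which to differentiate, collapse to ordinary differential equations in the cohomogeneity one variable for the $a_i$. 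Adjoining $\mathrm{Ric}=\lambda g$ and eliminating the redundancies forced by the K\"ahler condition then reduces the problem, for each value of the Einstein constant $\lambda$, to a small ODE system for the profile of the metric --- possibly after passing to a momentum-type coordinate $\tau$ in which the right-hand sides become polynomial.

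Because $\mathfrak{e}(2)$ is solvable with such simple structure constants, I expect this system to be integrable in elementary terms, perhaps after one quadrature, so that its general solution depends on finitely many constants of integration. Modulo the residual freedom --- translation in $t$, an overall scaling (which rescales $\lambda$, so $\lambda$ may be normalized to $-1$, $0$, or $1$), a few discrete symmetries, and the choice of $\Gamma$ --- these constants yield a list of normal forms. Producing that list is the classification statement, and is essentially bookkeeping once the reduction is in hand.

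The substance of the argument is the completeness analysis. For each normal form I would determine the maximal interval $I$ on which all $a_i>0$, together with the behaviour of the $a_i$ at the ends of $I$. A preliminary observation restricts the geometry: $E(2)$ has no fixed points and no one-dimensional orbits here, since a two-dimensional subgroup of $E(2)$ cannot act transitively on a $2$-sphere; hence every non-principal orbit is either a flat two-dimensional special orbit or an exceptional three-dimensional (reflection-type) orbit. Moreover, because the $dt^2$ term is unweighted, a finite endpoint of $I$ is always at finite distance, so completeness is equivalent to: either $I=\mathbb{R}$ (the manifold is then $\mathbb{R}\times E(2)/\Gamma$, automatically complete), or each finite endpoint of $I$ is an end over which the metric extends smoothly across such a non-principal orbit. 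So the cases to disentangle are: (i) the solution's maximal interval is all of $\mathbb{R}$; (ii) the maximal interval has a finite endpoint at which exactly one $a_i$ vanishes to first order with curvature bounded, and one must check the smooth-closure conditions --- the analogue of the polar-coordinate conditions for a disk bundle --- for the metric to cap off over a two-dimensional special orbit (or, in the reflection case, the parity conditions for capping over a three-dimensional exceptional orbit); (iii) the maximal interval has a finite endpoint carrying a genuine curvature singularity or an unsmoothable coefficient degeneration, which is incomplete and discarded. If $\lambda>0$ occurs, Bonnet--Myers forces $I$ to be compact, hence capped at both ends. The main obstacle I foresee is exactly the dichotomy (ii) versus (iii): matching the precise vanishing rates of the explicit solutions against smooth extendability over a non-principal $E(2)$-orbit, while ruling out finite-distance singular ends, and carrying this out uniformly in the sign of $\lambda$ and in the admissible $\Gamma$. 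Collecting the normal forms that survive as (i) or (ii) gives the complete K\"ahler-Einstein metrics together with their classification.
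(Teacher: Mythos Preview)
Your outline has two genuine gaps that would stop the argument.

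First, the reduction to ODEs is correct, but your expectation that the $E(2)$ system is ``integrable in elementary terms, perhaps after one quadrature'' is unfounded and in fact false. In the paper's variables the system is
\[
2a'/a=-a^2+c^2,\qquad 2b'/b=a^2+c^2,\qquad 2c'/c=a^2-c^2+2a^2b^2,
\]
and no closed-form solution is available. The classification is therefore \emph{not} bookkeeping over a list of explicit normal forms. The paper instead carries out a qualitative analysis: it locates the equilibria $(q,0,q)$, linearizes, and shows (by case analysis on the sign of $c^2-a^2$ and $c^2-a^2-2a^2b^2$, together with monotonicity of $ab,bc,ac$ and a Gr\"onwall argument to exclude $q=0$) that the \emph{only} solutions with maximal interval $(-\infty,\eta)$ are the unstable curves of $(q,0,q)$, and that all other solutions hit a finite-distance singularity. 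Then nullcline estimates on $d(a/c)/db$ and $d(ac)/db$ are needed to prove that, although $\eta$ is finite in $t$, the end at $\eta$ is at infinite distance. None of this fits your ``solve, then inspect'' scheme.

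Second, and more seriously, your completeness criterion is wrong for a non-compact group. You assert that if $I=\mathbb{R}$ the manifold $\mathbb{R}\times E(2)/\Gamma$ is ``automatically complete,'' and more generally treat completeness as equivalent to transverse completeness plus smooth capping. But the principal orbits $E(2)/\Gamma$ are themselves non-compact, and a finite-length curve can escape to infinity \emph{within} an orbit (or while drifting among orbits) even when the $t$-direction is under control. The paper handles this with an explicit Cauchy--Schwarz type argument (Proposition~\ref{prop:complete}): writing $E(2)$ in coordinates $(x,y,\theta)$ with $\sigma_1=\cos\theta\,dx+\sin\theta\,dy$, $\sigma_3=-\sin\theta\,dx+\cos\theta\,dy$, one uses $a,c\ge q$ to bound $L(\gamma)$ below by integrals of $|\cos\theta\,x'+\sin\theta\,y'|$ and $|-\sin\theta\,x'+\cos\theta\,y'|$, and then combines these to show $|x|$ and $|y|$ remain bounded along any finite-length curve. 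This step is essential and has no analogue in your plan; without it, the existence of a complete metric is not established.
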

See Theorem~\ref{E2thm} for a precise statement.
One part of the proof proceeds along similar lines to those in \cite{d-s1} for $SU(2)$. Another part is systematized via recent results of Verdiani and Ziller \cite{vz}. The final completeness argument involves Cauchy-Schwarz type estimates, a method that has been employed recently in \cite{a-m2} to prove completeness in the simpler case of biaxial metrics for a quotient of the Heisenberg group.

In the first part of this work, which is more foundational, we give a framework for the study
of the existence problem for a class of K\"ahler-Einstein metrics in dimension four
that includes the diagonal Bianchi type A class.
Our framework is inspired by the references \cite{a-m}, \cite{a-m2}, but does not share
their Lorentzian geometric motivation.

Concretely, we describe a class of K\"ahler metrics via a set of Lie bracket
relations on a given local frame, and impose the K\"ahler-Einstein condition
as a set of generalized PDEs involving directional derivatives in the frame
directions. See \cite{a-m2} for a precise definition of a generalized PDE.

For the case where the Einstein constant is nonzero, we show that the resulting
system gives rise locally to a system of ODEs. The Bianchi type A metrics form
examples for this case. On the other hand, in one of the Ricci flat subcases, an analogous process yields a small number of generalized PDEs with directional derivatives along a two-dimensional distribution.

These two derivations appear in the appendix, and in the rest of the paper we investigate each of these two cases. We have already mentioned the completeness result pertaining to the first case. For the Ricci flat subcase, there are additional well-known incomplete cohomogeneity one examples, but the fact that this case is characterized by PDEs rather than ODEs raises the question of whether there are additional examples. We show that there are no {\em complete} examples of a certain type, but prove local existence of additional solutions.
\begin{theorem}
There exists a K\"ahler Ricci-flat metric $g$ on an open
neighborhood of $\mathbb{R}^4$ admitting a totally-geodesic two-dimensional
foliation whose leaves are isometric to a given model leaf-metric.
The dimension of the Lie algebra of Killing fields of $g$ is at least $2$.
\end{theorem}
See Theorem~\ref{g-bar-g} for a precise statement.

Our method in the Ricci flat subcase is as follows. First, such a metric must possess a two-dimensional totally geodesic foliation. Aided by a theorem due to Yau \cite{y}, we show
that if the leaves are finitely connected and the leaf metric is not flat, it cannot be complete or even possess a completion obtained by adjoining one point to the leaf. This automatically
proves incompleteness of the K\"ahler metric with a foliation of this type. We then show by an explicit construction that in dimension two, there exist metrics which possess the same properties
as a leaf metric must have. The main such property is that the metric has positive Gauss curvature
which, when used as a conformal factor, yields a metric with Gauss curvature $-2$.  Then we show that using these leaf-like metrics as local data, constructing the Ricci-flat K\"ahler metric locally amounts to a collection of decoupled PDE problems, which are handled by elementary means.

Returning to our framework, the presentation of the problem in terms of a frame with Lie bracket relations can be considered dual to one given in terms of an exterior differential system
(EDS). We note that a proof of the reduction of the problem in the non-Ricci-flat case
to an ODE system via EDS methods would involve four prolongations to reach involutivity.

As a means for simplifying standard EDS methods for the purpose of constructing solutions, this reduction procedure is not particularly remarkable. But we find it greatly aids in maintaining geometric intuition for the problem. For example, one byproduct in the K\"ahler case is that integrability of the complex structure has a particularly appealing form in terms of conditions on
the shears of the frame vector fields. Additionally, it is fairly straightforward to recast geometric problems in this form. In future work we plan to study the case of Ricci solitons for a similar system.

Section~\ref{sec:sh} contains preliminary material on shear and integrability of complex
structures. Sections~\ref{construct}-\ref{sec:KE} introduce the system for K\"ahler-Einstein
metrics, and Section~\ref{OdE} describes how to construct for it an ODE/PDE system as described
above, with the proof in the appendix. In section~\ref{sec:coho} we specialize to diagonal
cohomogeneity one metrics, and consider simple examples in sections~\ref{ric-flat}-\ref{sec:Heis}.
Section~\ref{sec:Euc} is devoted to the case of the Euclidean group $E(2)$, and culminates in the
proof of completeness of the appropriate K\"ahler-Einstein metrics. Section~\ref{generalized} is
devoted to the incompleteness results in the Ricci-flat subcase. In section~\ref{coords}
local existence is shown first for the leaf metrics, and from that for the corresponding
Ricci-flat K\"ahler metrics.

\section{Shear and integrability}\lb{sec:sh}

Let $(M,g,J)$ be an almost hermitian $4$-manifold. We fix a local oriented orthonormal frame
denoted
\[
\{e_i\}=\{\kk, \tT=J\kk, \xx, \yy=J\xx\}.
\]
In the frame domain, we have an orthogonal decomposition of the tangent bundle:
\[
\text{$TM=\VV\oplus\HH$, \quad with ${\VV}=\mathrm{span}(\kk,\tT),\quad {\HH}=\mathrm{span}(\xx,\yy)$.}
\]
Let $\UU$ stand for either $\VV$ or $\HH$, and $\pi_{\UU^\perp}:TM\to{\UU^\perp}$ denote the orthogonal projection.
For a vector field $X\in\Gamma(\UU)$, consider the operator $\pi_{\UU^\perp}\circ \n X|_{\UU^\perp}:\Gamma(\UU^\perp)\to\Gamma({\UU^\perp})$, where $\n$ is the Levi-Civita covariant
derivative of $g$. Define the {\em shear operator} of $X$ by
\[
\text{$S_X$\ :=\ trace-free symmetric part of $\pi_{\UU\perp}\circ \n X|_{\UU^\perp}$.}
\]
See \cite{a-m} for background on the relation to the shear operator in general relativity.

Our purpose here is to give a condition equivalent to the integrability of $J$ in terms of
shear operators.
\begin{thm}\lb{Nij0}
Given the above set-up, the almost complex structure $J$ is integrable in the frame domain
if and only if
\begin{align}\lb{Nij}
\mathrm{i})&\ \ J S_{\xx}=S_{\yy} \text{ on ${\VV}$.}\nonumber\\
\mathrm{ii})&\ \ J S_{\kk}=S_{\tT} \text{ on ${\HH}$.}
\end{align}
\end{thm}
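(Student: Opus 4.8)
The plan is to work directly with the Nijenhuis tensor and exploit the fact that, for an almost complex structure, integrability is equivalent to the vanishing of $N(X,Y)$ on a basis, where
\[
N(X,Y) = [JX,JY] - [X,Y] - J[JX,Y] - J[X,JY].
\]
By $J$-bilinearity and antisymmetry of $N$ (up to the usual sign), it suffices to evaluate $N(\kk,\xx)$, since $N(\kk,\yy) = N(\kk,J\xx) = -JN(\kk,\xx)$, $N(\tT,\xx)=N(J\kk,\xx)=-JN(\kk,\xx)$, and $N(\kk,\tT)=N(\xx,\yy)=0$ automatically. So the entire content of the theorem is packaged in the single identity $N(\kk,\xx)=0$, and I would rewrite this vanishing in terms of covariant derivatives using $[A,B]=\n_A B - \n_B A$.

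The key step is to decompose each covariant derivative $\n_{e_i} e_j$ along the splitting $TM = \VV \oplus \HH$ and to recognize which components are governed by the shear operators $S_\kk, S_\tT$ (these control the $\HH$-to-$\HH$ mixing, i.e. the trace-free symmetric part of $\pi_\HH \circ \n(\cdot)|_\HH$ for $\VV$-fields, by definition) and which by $S_\xx, S_\yy$ (the $\VV$-to-$\VV$ mixing). The first observation is that the components of $N(\kk,\xx)$ lying in $\VV$ involve only how $\HH$-fields differentiate into $\VV$, hence only $S_\xx$ and $S_\yy$ together with the traces and the skew parts; similarly the $\HH$-components of $N(\kk,\xx)$ involve only $S_\kk, S_\tT$ and their traces/skew parts. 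One then checks that the trace parts and the skew-symmetric parts drop out of $N$: the trace parts because they correspond to conformal-type terms that $N$ is insensitive to, and the skew parts because, for the Levi-Civita connection, the skew part of $\pi_{\U^\perp}\circ\n X|_{\U^\perp}$ is itself expressible via $[X,\cdot]$ in a way that cancels in the Nijenhuis combination — equivalently, using $\n g = 0$ and that $J$ is $g$-orthogonal, the antisymmetric contributions pair up. What remains after this reduction is exactly: the $\VV$-part of $N(\kk,\xx)$ equals (a nonzero universal multiple of) $(JS_\xx - S_\yy)$ applied to the $\VV$-frame, and the $\HH$-part equals $(JS_\kk - S_\tT)$ applied to the $\HH$-frame. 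Hence $N \equiv 0 \iff$ conditions i) and ii) hold.

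I would carry this out concretely by introducing connection coefficients $\Gamma^k_{ij} = g(\n_{e_i} e_j, e_k)$, writing out the four terms of $N(\kk,\xx)$ in this notation, and projecting onto $\VV$ and onto $\HH$ separately. Using $\tT = J\kk$, $\yy = J\xx$, one translates the appearance of $J$ inside the brackets into index shifts $\kk \leftrightarrow \tT$, $\xx \leftrightarrow \yy$ among the $\Gamma$'s; the metric-compatibility antisymmetry $\Gamma^k_{ij} = -\Gamma^j_{ik}$ then lets one collect terms. The shear operator $S_\xx$ on $\VV$ has matrix entries built from $\Gamma^\kk_{\xx\kk}, \Gamma^\tT_{\xx\tT}, \Gamma^\tT_{\xx\kk}, \Gamma^\kk_{\xx\tT}$ (trace-free symmetric combination), and likewise for the others; one then verifies that the $\VV$-component of $N(\kk,\xx)$ is precisely the matrix of $JS_\xx - S_\yy$ acting on $\VV$, up to the scalar factor, and dually for $\HH$. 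A useful bookkeeping device is to note $N(JX,Y) = -JN(X,Y)$, which forces the $\VV$- and $\HH$-parts to each be "$J$-antilinear" in the right slot, matching the structure of $JS_\bullet - S_{J\bullet}$.

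**Main obstacle.**
The essential difficulty is the algebraic bookkeeping needed to show that only the trace-free symmetric parts survive — that is, verifying that the traces and the skew-symmetric parts of the relevant blocks of $\n$ genuinely cancel in the Nijenhuis combination, rather than merely checking this on examples. One must be careful that the decomposition $\pi_{\U^\perp}\circ\n X|_{\U^\perp}$ is the only block of $\n$ that enters; components like $\pi_\VV \circ \n_{\kk}(\cdot)$ with a $\VV$-input also appear in $[\kk,\cdot]$ but must be shown to cancel against their partners when $\kk$ is replaced by $\tT = J\kk$. Organizing the sixteen-odd connection coefficients so that these cancellations are manifest, and confirming the universal nonzero proportionality constant (so that vanishing of $N$ is genuinely equivalent, not merely implied), is where the real work lies; everything else is a routine, if lengthy, index computation.
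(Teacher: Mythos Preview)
Your strategy is sound and would yield a correct proof. It also shares the paper's starting point: reduce integrability to $N(\kk,\xx)=0$ and split into $\VV$- and $\HH$-components. The difference lies in how you extract the shear. You convert Lie brackets to covariant derivatives via $[A,B]=\n_A B-\n_B A$, write everything in connection coefficients $\Gamma^k_{ij}$, and then must isolate the trace-free symmetric blocks --- hence your ``main obstacle'' of showing traces and skew parts cancel. The paper instead uses the Lie-bracket formula \Ref{sh-coef} for the shear coefficients directly: computing, say,
\[
g(N(\kk,\xx),\xx)=g([\tT,\yy],\xx)-g([\kk,\xx],\xx)+g([\tT,\xx],\yy)+g([\kk,\yy],\yy)
= -2\sigma_2^{\tT}-2\sigma_1^{\kk},
\]
one sees the four bracket terms assemble \emph{immediately} into the shear combinations $\sigma_i^{\kk},\sigma_i^{\tT}$, with no trace or twist terms ever appearing. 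The same happens for the $\yy$-component and for the $\VV$-projections. So the vanishing of $N(\kk,\xx)$ is literally the pair $\sigma_1^{\tT}=\sigma_2^{\kk}$, $\sigma_2^{\tT}=-\sigma_1^{\kk}$ (and the analogous pair on $\VV$), which is exactly $JS_\kk=S_\tT$, $JS_\xx=S_\yy$.

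What the paper's route buys is that your ``main obstacle'' simply evaporates: there is no separate cancellation to verify, because the Nijenhuis combination already packages the bracket inner products into precisely the symmetric trace-free pairings of \Ref{sh-coef}. Your route is not wrong, but the detour through $\n$ creates work that the direct Lie-bracket computation avoids.
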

The proof is similar to \cite[Theorem 1]{a-m}, and will be omitted. Its method
is to translate the vanishing of the Nijenhuis tensor into the above two conditions.
This relies on the following expression of the matrix corresponding to the
shear operator in a local oriented orthonormal frame $\{v_1,v_2\}$ on $\UU^\perp$.
\[
[S_X]_{v_1,v_2}=
\begin{bmatrix}
        - \sigma_1 & \sigma_2\\
        \sigma_2 & \sigma_1\\
      \end{bmatrix},
\]
with {\em shear coefficients}:
\be\lb{sh-coef}
\begin{aligned}
2\sigma_1\ &:=\
 \ \ g([X,v_1],v_1)-g([X,v_2],v_2),\\
2\sigma_2\ &:=\
 -g([X,v_1],v_2) - g([X,v_2],v_1).
\end{aligned}
\end{equation}

One simple case in which integrability holds by Theorem \ref{Nij0}
is when all the shears vanish: $S_{e_i}=0$, $i=1,\ldots,4$.
We refer to this as the shear-free case. Our main results concern
cases which are not shear-free.

\section{Shear and K\"ahler metrics}\lb{construct}

Let $(M,g)$ be a Riemannian $4$-manifold admitting an orthonormal frame $\{e_i\}=\{\kk, \tT, \xx, \yy\}$,
defined over an open $U\subset M$, which satisfies the Lie bracket relations
\begin{align}
&[\kk,\tT]=L(\kk+\tT),\qquad &&[\xx,\yy]=N(\kk+\tT),\lb{brack1}\\
&[\kk,\xx]=A\xx+B\yy,\qquad  &&[\kk,\yy]=C\xx+D\yy,\lb{brack2}\\
&[\tT,\xx]=E\xx+F\yy,\qquad  &&[\tT,\yy]=G\xx+H\yy,\lb{brack3}
\end{align}
for smooth functions $A, B, C, D, E, F, G, H, L, N$ on $U$ such that
\begin{align}
A&-D=F+G,\qquad B+C=H-E,\lb{rels1}\\
N&=A+D=-(E+H).\lb{rels2}
\end{align}
Define an almost complex structure
$J=J_{g,e_i}$ by linearly extending the relations $J\kk=\tT$, $J\tT=-\kk$, $J\xx=\yy$
and $J\yy=-\xx$.
\begin{prop}\lb{kah}
(M,g,J) defined as above is a K\"ahler structure on $U$.
\end{prop}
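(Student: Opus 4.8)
The plan is to verify the two defining properties of a Kähler structure: that $J$ is an orthogonal almost complex structure compatible with $g$, that $J$ is integrable (using Theorem~\ref{Nij0}), and that the Kähler form $\omega = g(J\cdot,\cdot)$ is closed. Orthogonality and compatibility of $J$ with $g$ are immediate from the definition of $J$ on the orthonormal frame $\{\kk,\tT,\xx,\yy\}$, since $J$ is by construction a skew-symmetric endomorphism with $J^2=-\mathrm{Id}$. So the content lies in integrability and in $d\omega=0$.

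For integrability, I would compute the shear coefficients of each frame vector field from \Ref{sh-coef}, using the bracket relations \Ref{brack1}--\Ref{brack3}. On $\HH=\mathrm{span}(\xx,\yy)$ the relevant brackets are $[\kk,\xx],[\kk,\yy]$ for $S_\kk$ and $[\tT,\xx],[\tT,\yy]$ for $S_\tT$; plugging into \Ref{sh-coef} gives $2\sigma_1(S_\kk)=A-D$, $2\sigma_2(S_\kk)=-(B+C)$, and $2\sigma_1(S_\tT)=E-H$, $2\sigma_2(S_\tT)=-(F+G)$. The condition $JS_\kk=S_\tT$ on $\HH$ translates, via the matrix form of the shear operator given in the excerpt (noting that left-multiplication by $J$ in the $v_1,v_2=\xx,\yy$ frame swaps and sign-changes the entries appropriately), into precisely the relations $A-D=F+G$ and $B+C=H-E$, which are exactly \Ref{rels1}. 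Similarly, on $\VV=\mathrm{span}(\kk,\tT)$ one uses $[\xx,\kk],[\xx,\tT]$ and $[\yy,\kk],[\yy,\tT]$; from \Ref{brack2}--\Ref{brack3} these brackets have no $\kk,\tT$-components beyond what is forced, so $S_\xx$ and $S_\yy$ can be read off — I expect both to vanish or to satisfy $JS_\xx=S_\yy$ automatically given \Ref{brack1}, since $[\xx,\yy]$ lies in $\VV$ along the single direction $\kk+\tT$. Thus Theorem~\ref{Nij0}(i)--(ii) hold and $J$ is integrable.

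For $d\omega=0$, write $\omega = \kb\wedge\tT^\flat + \xx^\flat\wedge\yy^\flat$ in terms of the dual coframe, and use the Cartan formula $d\eta(X,Y)=X\eta(Y)-Y\eta(X)-\eta([X,Y])$ together with the structure equations dual to \Ref{brack1}--\Ref{brack3}. Computing $d\omega(e_i,e_j,e_k)$ on all triples of frame vectors reduces, after using that the coframe is dual to the $e_i$ and that $\omega(e_i,e_j)$ is constant, to a sum of terms of the form $\omega([e_i,e_j],e_k)$ plus cyclic permutations. Expanding these with the bracket relations, the coefficients that appear are combinations of $A,\dots,H,L,N$; the claim is that every such combination vanishes precisely by virtue of \Ref{rels1}--\Ref{rels2}. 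In particular the triple $(\kk,\xx,\yy)$ produces a term involving $N$ versus $A+D$, and $(\tT,\xx,\yy)$ one involving $N$ versus $-(E+H)$, matching \Ref{rels2}; the mixed triples involving two $\HH$-vectors and one $\VV$-vector produce exactly \Ref{rels1}.

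The main obstacle is purely bookkeeping: keeping the signs and the index conventions straight in the shear-operator matrix identification and in the Cartan-formula expansion of $d\omega$, so that the ten bracket functions assemble into exactly the four scalar relations \Ref{rels1}--\Ref{rels2} and no spurious extra constraint appears. There is no conceptual difficulty — once Theorem~\ref{Nij0} is invoked for integrability, the remaining work is a finite check that the structure constants of the frame are consistent with $d\omega=0$, and the hypotheses \Ref{rels1}--\Ref{rels2} have evidently been reverse-engineered to make this true. I would organize the computation by first disposing of integrability via the shear translation, then handling $d\omega=0$ triple-by-triple, flagging at each step which of \Ref{rels1}--\Ref{rels2} is being consumed.
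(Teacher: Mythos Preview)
Your integrability argument via Theorem~\ref{Nij0} is exactly the paper's. For the K\"ahler condition, however, you take a different route: you verify $d\omega=0$ directly from the coframe structure equations, whereas the paper instead writes down the Levi-Civita connection explicitly (formulas \Ref{conn}), extends it by requiring $\nabla J=0$ and zero torsion, and then checks metric compatibility. Both are valid, since for an integrable Hermitian structure $d\omega=0$ is equivalent to $\nabla J=0$; your approach is arguably more direct, while the paper's has the advantage of producing the covariant-derivative formulas \Ref{conn} as a byproduct, which are used immediately afterward to compute the Ricci form.

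One correction to your bookkeeping: in the $d\omega$ computation, the triples $(\kk,\tT,\xx)$ and $(\kk,\tT,\yy)$ contribute nothing, and the two nontrivial triples $(\kk,\xx,\yy)$ and $(\tT,\xx,\yy)$ give exactly $N-(A+D)=0$ and $-N-(E+H)=0$, i.e.\ relations \Ref{rels2} alone. Relations \Ref{rels1} are \emph{not} consumed by $d\omega=0$; they are used only for integrability. So the split is clean: \Ref{rels1} $\Leftrightarrow$ $J$ integrable, \Ref{rels2} $\Leftrightarrow$ $d\omega=0$.
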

\begin{proof}
$J$ clearly makes $g$ into an almost hermitian metric.
To see that $J$ is integrable, we verify the conditions of Theorem~\ref{Nij0}
which, in view of \Ref{sh-coef}, are expressed in terms of shear coefficients
as $\sig_1^\yy=\sig_2^\xx$, $\sig_2^\yy=-\sig_1^\xx$, $\sig_1^\tT=\sig_2^\kk$,
$\sig_2^\tT=-\sig_1^\kk$. Using \Ref{sh-coef} and the orthonormality of our frame,
by \Ref{brack2}, \Ref{brack3}, the first pair of these
equations each takes the form $0=0$, whereas the second pair is equivalent to the assumed
relations \Ref{rels1}.

To show that $g$ is K\"ahler,
define a connection on $U$ by first setting
\be\lb{conn}
\n_\kk\kk=-L\tT,\qquad  \n_\xx\xx=A\kk+E\tT,\qquad
\n_\xx\kk=-A\xx+E\yy,
\end{equation}
and then having all other covariant derivative expressions on frame fields
determined by the requirement that $\n$ be torsion-free and make $J$ parallel
(here the definition of $J$ and relations \Ref{brack1}-\Ref{rels2} are used
repeatedly).
It is easily checked that $\n$ is compatible with the metric $g$, so that
it is its Levi-Civita connection and thus $J$ is $g$-parallel. This completes the proof.
\end{proof}

\section{The Ricci form}

The Ricci form of the K\"ahler metric $g$ in Proposition~\ref{kah} is computed
as follows. Denote by $w_1=\kk-i\tT$, $w_2=\xx-i\yy$ the corresponding complex-valued frame,
and compute the four complex valued $1$-forms $\Gamma_i^j, i,j=1,2$ for which
$\n w_i=\Gamma_i^j\otimes w_j$, where here $\n$ denotes the obvious complexification
of the Levi-Civita connection of $g$. The formulas are deduced by computing the components
$\n_{e_\ell} w_i$, where $e_\ell$ stands for one of the frame fields, using the covariant derivative frame formulas for the Levi-Civita connection $\n$, given in the proof of Proposition~\ref{kah}.
Two of the four $\Gamma_i^j$'s resulting from this calculation are
\[
\Gamma_1^1=-iL(\hat\kk+\hat\tT),\qquad \Gamma_2^2=-i(C-H)\hat\kk-i(A-F)\hat\tT,
\]
where the hatted quantities denote  the non-metrically-dual coframe of $\{e_\ell\}$.
Citing, for example, Lemma 4.2 in \cite{dr-ma}, the Ricci form of $g$ is given by
\begin{multline}\lb{ric}
\rho=i(d\Gamma_1^1+d\Gamma_2^2)=L(d\hat\kk+d\hat\tT)+(C-H)d\hat\kk+(A-F)d\hat\tT\\
+dL\we(\hat\kk+\hat\tT)+d(C-H)\we\hat\kk+d(A-F)\we\hat\tT.
\end{multline}

\section{The K\"ahler-Einstein condition}\lb{sec:KE}
Suppose the metric $g$ of Sec.~\ref{construct} is
K\"ahler-Einstein, so that
\[
\rho=\lam\om,
\]
where $\om=\hat\kk\we\hat\tT+\hat\xx\we\hat\yy$ is the K\"ahler form, and $\lam$ is
the Einstein constant. We wish to rewrite this equation in a different form.
Applying to our coframe the formula
$d\eta(a,b)=d_a(\eta(b))-d_b(\eta(a))-\eta([a,b])$,
valid for any smooth $1$-form $\eta$, we have
\[
\begin{aligned}
&d\hat\kk(\xx,\yy)=-\hat\kk([\xx,\yy])=-\hat\kk (N(\kk+\tT))=-N=d\hat\tT(\xx,\yy),\\
&d\hat\kk(\kk,\tT)=-L=d\hat\tT(\kk,\tT),\qquad d\hat\kk(\kk,\xx)=d\hat\kk(\kk,\yy)=
d\hat\kk(\tT,\xx)=d\hat\kk(\tT,\yy)=0.
\end{aligned}
\]
Using this in \Ref{ric} along with the expression for $\om$ in the coframe, we
immediately see that the K\"ahler-Einstein equation is equivalent
to the system
\be\lb{KE-eqns}
\begin{aligned}
&\rho(\xx,\yy)=-N(2L+C-H+A-F)=\lam,\\
&\rho(\kk,\tT)=-L(2L+C-H+A-F)+d_{\kk-\tT}L-d_\tT(C-H)+d_\kk(A-F)=\lam,\\
&\rho(\kk,\xx)=-d_\xx(L+C-H)=0,\\
&\rho(\kk,\yy)=-d_\yy(L+C-H)=0,\\
&\rho(\tT,\xx)=-d_\xx(L+A-F)=0,\\
&\rho(\tT,\yy)=-d_\yy(L+A-F)=0,
\end{aligned}
\end{equation}
where $d_{e_\ell}$ denotes the directional derivative with respect to $e_\ell$.
As equations like~\Ref{KE-eqns} involve directional derivatives rather than partial
derivatives, they were called generalized PDEs in \cite{a-m2}, where a
precise definition appears.

\section{The ODE and generalized PDE systems}\lb{OdE}

In the appendix we prove Theorem~\ref{ODE-thm} below, showing that Lie bracket conditions
\Ref{brack1}-\Ref{rels2}, together with the generalized PDEs \Ref{KE-eqns} characterizing the
K\"ahler-Einstein condition, determine when $\lam\ne 0$, a locally defined system of five ODEs on six functions, while in one of the $\lam=0$ subcases they give rise to a system of four
generalized PDEs along a two-dimensional distribution. Our purpose in this section is to introduce these two systems.

We first describe a function that will eventually give rise to the independent variable
for the ODE system we are trying to obtain. Assuming the setting of section \ref{construct},
the Lie bracket relations \Ref{brack1}-\Ref{brack3} imply that the distribution spanned by
$\kk+\tT$, $\xx$ and $\yy$ is integrable. Since this distribution is orthogonal to $\kk-\tT$,
while the latter vector field has constant length and is easily seen to have geodesic flow, it
follows that it is locally a gradient (cf. \cite[Cor. 12.33]{onel}). Thus, there exists
a smooth function $\ta$ defined in some open set $V\subset U$, such that
\be\lb{grad}
\kk-\tT=\n\ta.
\end{equation}
Consider now the six functions $P$, $Q$, $R$, $S$, $L$, $N$,
where the last two are as in \Ref{brack1}, and the first four
are given in terms of four of the functions in \Ref{brack2}-\Ref{brack3} by
\begin{align}
P&=(B-C)+(F-G), &&Q=(B-C)-(F-G),\nonumber \\
R&=\sqrt{(B+C)^2+(F+G)^2}, &&S=\tan^{-1}\left(\frac{B+C}{F+G}\right),\lb{chan-var}
\end{align}
where $S$ is only defined on the set $\{F+G\}\ne 0$.
\begin{thm}\lb{ODE-thm}
Let $(M,g)$ be a Riemannian 4-manifold admitting an orthonormal frame
$\{\kk,\tT,\xx,\yy\}$ as in section 3, satisfying in particular
relations \Ref{brack1}-\Ref{rels2}. Assume also that \Ref{KE-eqns} hold in
the set $V\cap\{F+G\ne 0\}$, where $V$ is the domain of $\ta$ of \Ref{grad}.
\begin{enumerate}
\item Assume $\lam\ne 0$.\\
Then $L$, $N$, $P$, $Q$, $R$, $S$  above are each a composition of a smooth real-valued function on the image of $\ta$. Additionally, abusing notation by still denoting the latter functions
by the same respective letters as the former, they satisfy
on $V\cap\{F+G\ne 0\}$ the ODE system
\begin{align}
N'&=N^2-LN,   &&L'=L^2-N^2+NP/4+R^2/4,\nonumber\\
R'&=(P/2+L)R, &&P'=PL+R^2,\nonumber\\
S'&=-Q/2.&&   \lb{sys}
\end{align}
\item Assume $\lam=0$ and $N=0$.\\
Then  $P$, $Q$, $R$, $S$, $L$ satisfy on $V\cap\{F+G\ne 0\}$
the system
\begin{align}
i&)\ d_{\kk-\tT}R=R(d_{\kk+\tT}S+P+2L),  &&ii)\ d_{\kk+\tT}R=-R(d_{\kk-\tT}S+Q),\nonumber\\
iii&)\ d_{\kk-\tT}L=2L^2+R^2/2, &&iv)\ d_{\kk-\tT}P-d_{\kk+\tT}Q=2LP+2R^2,\lb{sys2a}
\end{align}
whereas all $d_\xx$, $d_\yy$ derivatives of these functions vanish.
\end{enumerate}
\end{thm}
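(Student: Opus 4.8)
The plan is to treat both cases by a single computational engine: translate the Lie bracket relations \Ref{brack1}--\Ref{rels2} and the generalized PDEs \Ref{KE-eqns} into statements about the directional derivatives $d_{e_\ell}$ of the frame-coefficient functions, and then extract the evolution of the six (resp.\ five) functions in \Ref{chan-var}. First I would record the Jacobi identities for the frame $\{\kk,\tT,\xx,\yy\}$ applied to the brackets \Ref{brack1}--\Ref{brack3}. Each triple of vector fields yields an identity $[[e_a,e_b],e_c]+\text{cyc}=0$; expanding via \Ref{brack1}--\Ref{brack3} and collecting the $\xx$-, $\yy$-, and $(\kk+\tT)$-components produces a family of first-order relations among the directional derivatives of $A,\dots,H,L,N$. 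In particular the identity on $(\kk,\tT,\xx)$ and on $(\kk,\tT,\yy)$ will govern the $d_\kk$, $d_\tT$ derivatives of the $\xx,\yy$-coefficients, while the identity on $(\kk,\xx,\yy)$ and $(\tT,\xx,\yy)$ controls $d_\xx$, $d_\yy$ of the remaining coefficients and, crucially, forces many of the latter to vanish. The constraint equations \Ref{rels1}--\Ref{rels2} are then used to eliminate redundant coefficients in favor of the combinations $P,Q,R,S,L,N$, the point of the change of variables \Ref{chan-var} being exactly that it diagonalizes the linear algebra: $P,Q$ are the $d_\tT\mp d_\kk$-adapted pieces of $(B-C)\pm(F-G)$, and $(R,S)$ are polar coordinates on the $(B+C,F+G)$-plane on which a planar rotation (generated by the shear/twist of the frame) acts.

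Next I would feed in the K\"ahler--Einstein equations \Ref{KE-eqns}. The last four equations of \Ref{KE-eqns} say precisely that $L+C-H$ and $L+A-F$ are constant along $\HH=\mathrm{span}(\xx,\yy)$; combined with the Jacobi relations this both pins down the $d_\xx,d_\yy$-derivatives of $P,Q,R,S,L$ (they vanish, giving the last clause of part (2) and the reduction to functions of $\ta$ in part (1)) and lets me rewrite the $d_\xx,d_\yy$-parts of the remaining Jacobi relations as algebraic identities. The first two equations of \Ref{KE-eqns} carry the Einstein constant. When $\lam\neq 0$, the first equation forces $N(2L+C-H+A-F)=\lam\neq 0$, hence $N\neq 0$ on a dense open set and $2L+C-H+A-F=\lam/N$; differentiating this along $\kk-\tT=\n\ta$ and using the second \Ref{KE-eqns} equation lets me close the system. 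Using that $\kk-\tT$ has geodesic unit-speed flow and is a gradient \Ref{grad}, I can then reparametrize by $\ta$; writing $'$ for $d_{\kk-\tT}$, the Jacobi-plus-KE relations collapse to \Ref{sys}. The $S'=-Q/2$ equation is the $d_{\kk-\tT}$-component of the polar-angle evolution, and $R'=(P/2+L)R$ its radial component; $N',L',P'$ come from the $(\kk+\tT)$-, $\kk$-, $\tT$-components of the Jacobi identities after the KE substitution. When instead $\lam=0$ and $N=0$, the first \Ref{KE-eqns} equation is vacuous, so the function $2L+C-H+A-F$ is no longer algebraically determined and the system does not reduce to ODEs along $\n\ta$; one retains genuine $d_{\kk+\tT}$-derivatives. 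Keeping both $d_{\kk\pm\tT}$ directions and re-deriving the same Jacobi/KE relations in the new variables yields \Ref{sys2a}: $iii)$ is the $N=0$ specialization of the first equation of \Ref{sys} rewritten without reparametrization, $iv)$ combines the $P$- and $Q$-evolutions, and $i),ii)$ are the two components of the $(R,S)$ evolution now involving $d_{\kk+\tT}S$ and $d_{\kk-\tT}S$ as well.

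The main obstacle I expect is bookkeeping rather than conceptual: disentangling the large linear system produced by the Jacobi identities and the torsion-free/$\n J=0$ conditions, and verifying that after eliminating the redundant frame coefficients via \Ref{rels1}--\Ref{rels2} everything is expressible in $P,Q,R,S,L,N$ with no leftover constraints beyond \Ref{sys}/\Ref{sys2a}. The delicate point is the $(R,S)$ block: one must check that the $(B+C,F+G)$ pair only enters the reduced equations through the rotation-invariant quantity $R$ and the angle $S$, with $S$ appearing solely through its derivatives; this is where the hypothesis $F+G\neq 0$ (so $S$ is well-defined) and the precise form of the constraints \Ref{rels1} are used. Verifying the $\lam=0$, $N=0$ branch requires re-examining which divisions by $N$ were performed in the $\lam\neq0$ derivation and confirming that each corresponding relation survives as a differential identity (rather than an algebraic one) in that degenerate limit. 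Since the statement defers the full argument to the appendix, I would carry out this elimination there and here only outline the three inputs — Jacobi identities, KE equations \Ref{KE-eqns}, geodesic-gradient structure of $\kk-\tT$ — and the role of the change of variables \Ref{chan-var}.
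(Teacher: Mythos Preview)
Your plan is essentially the paper's own strategy: the appendix derives \Ref{sys} and \Ref{sys2a} by computing $d^2=0$ on the dual coframe (equivalent to your Jacobi identities on the frame), combining the resulting twelve scalar relations with the KE equations \Ref{KE-eqns}, and then passing to the variables \Ref{chan-var}. One point you should make sharper: the vanishing of the $d_\xx$, $d_\yy$ derivatives of $L,N,P,Q,R,S$ is \emph{not} by itself enough to conclude that these are functions of $\ta$ in case~(1); you must also kill the $d_{\kk+\tT}$ derivative. The paper does this via the elementary observation (its ``basic fact'') that $[\xx,\yy]=N(\kk+\tT)$ forces $N\,d_{\kk+\tT}f=d_\xx d_\yy f-d_\yy d_\xx f=0$ whenever $d_\xx f=d_\yy f=0$, so $d_{\kk+\tT}f=0$ provided $N\neq 0$. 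This is exactly where the hypothesis $\lam\neq 0$ enters (via \Ref{lam1} it makes $N$ nowhere zero, not merely nonzero on a dense set as you wrote), and it is precisely why case~(2) with $N=0$ retains genuine $d_{\kk+\tT}$ derivatives and remains a two-dimensional PDE system. Apart from this clarification, your outline matches the paper's argument.
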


We make the following remarks. If $F+G=0$  at $p$, but $B+C\ne 0$ at $p$, one can obtain
similar systems of equations valid at $p$ simply by redefining  $S$ to be $S+\tan^{-1} k$ for a constant $k$. Thus $R\ne 0$ is the only invariant restriction on the domain, corresponding to considering the non-shear-free region where the shear operators $S_\kk$ and $S_\tT$ do not vanish.

Second, and relatedly, note that the last equation in \Ref{sys} is decoupled from the others, so that one has the freedom to arbitrarily choose, say, $S$. Since $S$ is determined by $B+C$ and $F+G$, both of which appear in \Ref{rels1}, this is a reflection of the fact that the shear coefficients are not invariant, and a rotation of, $\xx$, $\yy$ in the plane they span will alter them, giving them, and hence $S$, arbitrary values, without changing the metric under consideration. One can similarly use such a rotation to simplify the form of equation \Ref{sys2a}, effectively eliminating in this case one of the variables $S$, $P$ and $Q$. We will employ such a choice in section \ref{generalized}.

One further point regarding case (1) of the theorem is that
if $\lam\ne 0$,
the second equation in \Ref{sys}, for $L'$, can be replaced by the constraint
\be
2\lambda=-N(4L+2N-P),
\end{equation}
which allows one to eliminate one of the functions $L$, $N$, $P$, reducing the number of unknowns.
See the appendix for the proof.

In the next three sections we discuss a large class of examples satisfying \Ref{brack1}-\Ref{rels2}
and \Ref{KE-eqns}, under both assumptions regarding $\lambda$ and $N$ given in Theorem~\ref{ODE-thm}. We will not be employing case $(1)$ of this theorem in those sections,
as other presentations of the equations seem more amenable to exploring various issues, in particular completeness of the metrics. We expect equations \Ref{sys} to be utilized in a future study of new types of metrics satisfying the system \Ref{brack1}-\Ref{rels2} and \Ref{KE-eqns}.
On the other hand, in sections~\ref{generalized} and \ref{coords} we rely heavily on a version
of equations \Ref{sys2a} of case (2) of the theorem.

\section{Cohomogeneity one examples}\lb{sec:coho}
In this section we begin the second part of this paper. We first discuss a notable class of examples of K\"ahler-Einstein metrics on $4$-manifolds admitting a frame satisfying conditions \Ref{brack1}-\Ref{rels2}.

Assume that $(M,g)$ is a $4$-dimensional Riemannian manifold admitting a proper isometric action by a Lie group $\mathcal{G}$ with cohomogeneity one.
Then there is a subgroup $\mathcal{H}<\mathcal{G}$ so that $\mathcal{G}/\mathcal{H}$ is the $3$-dimensional principal orbit type.
Let $p\in M$ be a point with isotropy group $\mathcal{K}$ satisfying $\mathcal{H}<\mathcal{K}<\mathcal{G}$. Then the orbit $\mathcal{G}\cdot p$ through $p$ is isomorphic to $\mathcal{G}/\mathcal{K}$. For the principal $\mathcal{K}$-bundle $\mathcal{G}\to\mathcal{G}/\mathcal{K}$,
consider the associated bundle $\mathcal{G}\times_{\mathcal{K}}\nu_p$, where $\nu_p$ is
the normal space to the orbit at $p$. The differential of the action mapping identifies this bundle
with the full normal bundle $\nu$ to the orbit. On the other hand the normal exponential map $\exp_p^\perp$ at $p$ sends an $\varepsilon$-disk in $\nu_p$ to a
slice for the action of $\mathcal{G}$
\[ S' = \{\exp_p(rX)\ |\ 0\le r < \varepsilon, |X|=1, X\perp \mathcal{G}\cdot p \}. \]
and induces, by the tubular neighborhood theorem, a map from a neighborhood of the
zero section in $\nu$ to a neighborhood of the orbit. Putting these facts together
we obtain an equivariant diffeomorphism,
\[ \mathcal{G}\times_\mathcal{K} D^{n+1}\cong\mathcal{G}\cdot S', \]
where $n=\mathrm{dim}\,\mathcal{K}-\mathrm{dim}\,\mathcal{H}$ (cf. \cite[Section 5.6]{pp}).

The isotropy action of $\mathcal{K}$ preserves length, so on $S'$, we see that the spheres
\[ S_r = \{\exp_p(rX)\ |\ |X|=1, X\perp \mathcal{G}\cdot p \} \]
are preserved by the induced action of $\mathcal{K}$.
Since points on one of these spheres have isotropy type $\mathcal{H}$,
we must have \[ \mathcal{K}/\mathcal{H} \cong \mathbb{S}^n. \]

Regarding the metric as residing on $\mathcal{G}\times_\mathcal{K} D^{n+1}$, it can be written
in the form
\begin{equation}\label{equivmet} dr^2+g_r.\end{equation}
We will assume from now on that $\mathcal{G}$ has dimension $3$ with $\mathcal{H}$ a discrete
principal stabilizer.
In the case of a unimodular group, we will now consider the special case of a diagonal metric, in the
form appearing,  for example, in \cite{d-s1}.
When $\mathcal{G}$ is unimodular, $g$ has Bianchi type A, and can be written as
\begin{equation}\label{bianchiAmet} g = (abc)^2dt^2+a^2\sigma_1^2+b^2\sigma_2^2+c^2\sigma_3^2, \end{equation}
for functions $a$, $b$, $c$ of $t$ and invariant $1$-forms $\sig_1$, $\sig_2$, $\sig_3$.
This change of variables allows a more effective use of standard ODE theory, as
will become evident in later sections.

The $\sigma_i$ satisfy, for some constants $p_i$
\begin{align*}
d\sigma_1 &= p_1\sigma_2\wedge\sigma_3, \\
d\sigma_2 &= p_2\sigma_3\wedge\sigma_1, \\
d\sigma_3 &= p_3\sigma_1\wedge\sigma_2.
\end{align*}
In terms of the basis $\partial_t,X_1,X_2,X_3$ dual to $dt,\sigma_1,\sigma_2,\sigma_3$
\begin{align}
[\partial_t,X_i] &= 0,\quad i=1,\ldots,3,\nonumber  \\
[X_1,X_2] &= -p_3 X_3,\nonumber \\
[X_2,X_3] &= -p_1 X_1, \nonumber\\
[X_3,X_1] &= -p_2 X_2.\lb{X123}
\end{align}
For the functions $w_1=bc$, $w_2=ac$, and $w_3=ab$, define functions $\alpha$, $\beta$, and $\gamma$ so that
\begin{align}
w_1'&=p_1w_2w_3+\alpha w_1, \\
w_2'&=p_2w_1w_3+\beta w_2, \\
w_3'&=p_3w_1w_2+\gamma w_3.
\end{align}
Then, following Dancer and Strachan \cite{d-s1}, we see that (modulo reordering the frame vectors) the only K\"ahler structures $(M,g,J)$ with $g$ of the form \Ref{bianchiAmet} have complex structure determined by
\begin{equation}\label{complexJ} J\partial_t = abX_3 \quad\text{and}\quad JX_1=\frac{a}{b}X_2, \end{equation}
and $\alpha$, $\beta$, and $\gamma$ satisfy
\[ \alpha=\beta \quad \text{and}\quad \gamma=0.\]
The K\"ahler form is then given by
\begin{equation}\label{kahlerform} \omega = abc^2dt\wedge\sigma_3+ab\sigma_1\wedge\sigma_2 = w_1w_2dt\wedge\sigma_3+w_3\sigma_1\wedge\sigma_2, \end{equation}
and $w_1,w_2,w_3$ satisfy
\begin{align}
w_1'&=p_1w_2w_3+\alpha w_1,\nonumber \\
w_2'&=p_2w_1w_3+\alpha w_2,\nonumber \\
w_3'&=p_3w_1w_2.\lb{w3}
\end{align}
In terms of $a,b,c$ this implies
\begin{align}
2a'/a&=-p_1a^2+p_2b^2+p_3c^2,\lb{K1} \\
2b'/b&=p_1a^2-p_2b^2+p_3c^2, \lb{K2}\\
2c'/c&=p_1a^2+p_2b^2-p_3c^2+2\alpha.\lb{KKE}
\end{align}

In the next subsection we derive the Einstein condition, after showing how
this model fits within the framework of sections~\ref{construct}-\ref{sec:KE}.

\subsection{The frame $\{\mathbf{k,t,x,y}\}$}

In this subsection we show how the metric $g$ of the previous subsection
gives rise to data satisfying \Ref{brack1}-\Ref{rels2} and \Ref{KE-eqns}.
Consider the orthonormal frame and dual coframe
\begin{align*}
\mathbf{k} &= \frac{\sqrt{2}}{2}\left(\frac{1}{c}X_3+\frac{1}{abc}\partial_t \right), & \hat{\mathbf{k}} &= \frac{\sqrt{2}}{2}(c\sigma_3+abcdt), \\
\mathbf{t} &= \frac{\sqrt{2}}{2}\left(\frac{1}{c}X_3-\frac{1}{abc}\partial_t \right), & \hat{\mathbf{t}} &= \frac{\sqrt{2}}{2}(c\sigma_3-abcdt), \\
\mathbf{x} &= \frac{X_1}{a}, & \hat{\mathbf{x}} &= a\sigma_1, \\
\mathbf{y} &= \frac{X_2}{b}, & \hat{\mathbf{y}} &= b\sigma_2.
\end{align*}

It can easily be checked that this frame satisfies \Ref{brack1}-\Ref{brack3}
for the functions
\begin{align*}
A&=-E=-\frac{a'}{\sqrt{2}a^2bc}=-\frac{1}{a}\frac{da}{d\tau}, & B &=F= -\frac{bp_2}{\sqrt{2}ac}, \\
D&=-H=-\frac{b'}{\sqrt{2}ab^2c}=-\frac{1}{b}\frac{db}{d\tau}, & C&=G=\frac{ap_1}{\sqrt{2}bc}, \\
L&=-\frac{c'}{\sqrt{2}abc^2}=-\frac{1}{c}\frac{dc}{d\tau}, & N &= -\frac{cp_3}{\sqrt{2}ab}.
\end{align*}

Here the prime denotes differentiation with respect to $t$, while the expressions in terms of
$d/d\ta$ are justified as follows. We know that relations \Ref{brack1}-\Ref{brack3} imply that
$\ta$ is locally defined and $d\tau=\hat{\mathbf{k}}-\hat{\mathbf{t}}$. Our metric
can be written as
\begin{equation}\label{framemet} g=\hat{\mathbf{k}}^2+\hat{\mathbf{t}}^2+\hat{\mathbf{x}}^2+
\hat{\mathbf{y}}^2=\frac{1}{2}d\tau^2+\frac{1}{2}(\hat{\mathbf{k}}+
\hat{\mathbf{t}})^2+\hat{\mathbf{x}}^2+\hat{\mathbf{y}}^2. \end{equation}
This is a special case of \Ref{equivmet} for $\tau=\sqrt{2}r$.
Furthermore,
\begin{align*}
\frac{\mathbf{k}-\mathbf{t}}{\sqrt{2}} &= \frac{\partial_t}{abc} & \frac{\hat{\mathbf{k}}-\hat{\mathbf{t}}}{\sqrt{2}} &= abcdt, \\
\frac{\mathbf{k}+\mathbf{t}}{\sqrt{2}} &= \frac{X_3}{c} & \frac{\hat{\mathbf{k}}+\hat{\mathbf{t}}}{\sqrt{2}} &= c\sigma_3, \\
\end{align*}
and
\[ \hat{\mathbf{k}}-\hat{\mathbf{t}}=d\tau=\sqrt{2}abcdt,\]
so that
\[ \frac{d}{d\tau}=\frac{1}{\sqrt{2}abc}\frac{d}{dt}. \]

\vspace{0.1in}
We note that for our list of functions $A$,\ldots,$H$, $L$, $N$, the four relations in \Ref{rels1}-\Ref{rels2} that imply the K\"ahler condition
impose only two additional relations here, say $A+D=N$ and $B+C=H-E$,
giving
\begin{align}
\fr{a'}a+\fr{b'}b&=p_3c^2,\lb{K3}\\
\fr{b'}b-\fr{a'}a&=p_1a^2-p_2b^2\lb{another}
\end{align}
These two are of course equivalent to \Ref{K1}-\Ref{K2}.

Next we consider the K\"ahler-Einstein equations \Ref{KE-eqns}.
The last four are satisfied automatically by virtue of the fact that
our $10$ functions are functions of ($t$, hence) $\ta$, and $d_\xx\ta=d_\yy\ta=0$.

Now the second of equations \Ref{KE-eqns} can be written in the form
\be\lb{K4}
- L(2L + C - H + A - F) + \fr d{d\ta}(2L + C - H + A - F) =\lam.
\end{equation}
To proceed further, consider the case $p_3\ne 0$.
Then $N$ is nowhere vanishing, and using the first equation
in \Ref{KE-eqns}, we can replace \Ref{K4} by the equation for $\fr d{d\ta}N$ in
\Ref{sys}. Checking, we easily find that the latter equation is equivalent
to \Ref{K3}. Thus if $p_3\ne 0$, the only additional independent
equation characterizing the K\"ahler-Einstein condition
is the first equation in \Ref{KE-eqns}, which, after simplifying
and using \Ref{K3} takes the form
\[
2\fr{c'}c=p_1a^2+p_2b^2-p_3c^2-\fr{2\lam}{p_3} (ab)^2.
\]
Comparing with \Ref{KKE} we deduce the relation
$\alpha =-\frac{\lambda}{p_3}a^2b^2$.

In the case where $p_3=0$, we have $N=0$. The first equation in \Ref{KE-eqns}
gives $\lam=0$. The second equation in \Ref{KE-eqns} is \Ref{K4},
which reads, via \Ref{KKE},
\begin{align*}
\fr {c'}{2(abc)^2}(-2\al)&=-\fr1{\sqrt{2}abc}\Big[\fr1{\sqrt{2}abc}(-2\al)\Big]'\\
&=-\fr1{\sqrt{2}abc}\fr1{\sqrt{2}ab}\fr{-c'}{c^2}(-2\al)-\fr {1}{2(abc)^2}(-2\al'),
\end{align*}
where we have used the fact that $ab$ is constant, as follows from \Ref{w3} since $p_3=0$.
As the left hand side is equal to the first term on the right, we see that we must have $\al'=0$.

To summarize, the K\"ahler conditions \Ref{rels1}-\Ref{rels2} are expressed as \Ref{K3}-\Ref{another},
while the Einstein condition can be summarized as
\begin{equation} p_3\neq0, \alpha = -\frac{\lambda}{p_3}w_3^2=-\frac{\lambda}{p_3}a^2b^2 \qquad\text{or}\qquad p_3=0,\lambda=0,\alpha'=0. \end{equation}

Note that besides $p_3=0$, the condition $\al=0$ also yields $\lam=0$.
This is in line with Theorem~\ref{ODE-thm}, as $2L+C-H+A-F$ is a multiple of $\al$.

\vspace{.1in}
Finally, the functions $P,Q,R,S$ are given below for completeness, as they will
not be used further.
\begin{align*}
P&=-\sqrt{2}\frac{a^2p_1+b^2p_2}{abc}, & Q &= 0, \\
R&=\frac{a^2p_1-b^2p_2}{abc}, & S &= \frac{\pi}{4}.
\end{align*}

\section{Ricci Flat Metrics with $p_3=0$}\lb{ric-flat}
The next two sections describe some examples of the metrics in the previous section.

When $p_3=0$ the system can be solved explicitly.  In terms of $w_1$, $w_2$, and $w_3$ we have
\begin{align*}
w_1'&=p_1w_2w_3+\alpha w_1, \\
w_2'&=p_2w_1w_3+\alpha w_2, \\
w_3'&=0,\quad
\alpha'=0.
\end{align*}
This can be written as
\begin{align*}
(e^{-\alpha t}w_1)'&=p_1w_3e^{-\alpha t}w_2, \\
(e^{-\alpha t}w_2)'&=p_2w_3e^{-\alpha t}w_1,
\end{align*}
where $\alpha$ and $w_3$ are constant.
This implies that
\[(e^{-\alpha t}w_1)''=p_1 p_2 w_3^2(e^{-\alpha t}w_1).\]
The solution then splits into four cases: $p_1p_2<0$, two cases with $p_1p_2=0$, and $p_1p_2>0$.

\subsubsection*{Case 1: Poincar\'e Group $p_1 p_2<0$}
Assume that $p_1=1$ and $p_2=-1$, then we have
\[(e^{-\alpha t}w_1)''=-w_3^2(e^{-\alpha t}w_1).\]
The solution is
\begin{align*}
w_1 &= ke^{\alpha(t-t_0)}\sin(w_3(t-t_0)),\\
w_2 &= ke^{\alpha(t-t_0)}\cos(w_3(t-t_0)).
\end{align*}
Therefore
\begin{align*}
a&= \sqrt{w_3 \cot(w_3(t-t_0))},\\
b&= \sqrt{w_3 \tan(w_3(t-t_0))},\\
c&=ke^{\alpha (t-t_0)}\sqrt{\frac{1}{2w_3}\sin(2w_3(t-t_0))},
\end{align*}
and the metric is
\begin{align*}g&=\frac{k^2}{2}e^{2\alpha (t-t_0)}\sin(2w_3(t-t_0))w_3\left(dt^2+\frac{\sigma_3^2}{w_3^2}\right)\\
&\qquad+w_3 \cot(w_3(t-t_0))\sigma_1^2+w_3 \tan(w_3(t-t_0))\sigma_2^2.\end{align*}
The singular points $t=t_0,t_0+\frac{\pi}{2w_3}$ are both at finite distance, so this metric is not complete.

\subsubsection*{Case 2: Abelian Group $T^3$ $p_1=p_2=0$}

Here the solution is
\begin{align*}
a&=a_0,\\
b&=b_0,\\
c&=c_0e^{\al(t-t_0)}
\end{align*}
The metric takes the form
\[ g=\Big(\fr{a_0b_0c_0}\al\Big)^2\Bigg(\Big(de^{\al(t-t_0)}\Big)^2+
\Big(\fr{\al}{a_0b_0}\Big)^2e^{2\al(t-t_0)}\,\sig_3^2\Bigg)+a_0^2\,\sig_1^2+b_0^2\,\sig_2^2
\]
This is a product of $T^2$ with a two dimensional cone metric,
which gives a smooth flat metric when  $\al=a_0b_0$.

\subsubsection*{Case 3: Heisenberg Group $p_1 p_2=0$}
Assume that $p_1=1$ and $p_2=0$.
Then the solution is
\begin{align*}
w_1&=kw_3e^{\alpha (t-t_0)}(t-t_0), \\
w_2&=k e^{\alpha (t-t_0)}.
\end{align*}
Therefore
\begin{align*}
a&=\frac{1}{\sqrt{t-t_0}}, \\
b&=w_3\sqrt{t-t_0}, \\
c&=ke^{\alpha (t-t_0)}\sqrt{t-t_0},
\end{align*}
and the metric is
\[g=k^2w_3^2e^{2\alpha(t-t_0)}(t-t_0)dt^2+\frac{1}{t-t_0}\sigma_1^2+w_3^2(t-t_0)\sigma_2^2+k^2e^{2\alpha(t-t_0)}(t-t_0)\sigma_3^2.\]
The singular point $t=t_0$ is at finite distance, so this metric is not complete.

\subsubsection*{Case 4: Euclidean Group $p_1 p_2>0$}
Assume that $p_1=1$ and $p_2=1$, then we have
\[(e^{-\alpha t}w_1)''=w_3^2(e^{-\alpha t}w_1).\]
The solution is
\begin{align*}
w_1 &= ke^{\alpha(t-t_0)}\sinh(w_3(t-t_0)),\\
w_2 &= ke^{\alpha(t-t_0)}\cosh(w_3(t-t_0)),
\end{align*}
Therefore
\begin{align*}
a&= \sqrt{w_3 \coth(w_3(t-t_0))},\\
b&= \sqrt{w_3 \tanh(w_3(t-t_0))},\\
c&=ke^{\alpha (t-t_0)}\sqrt{\frac{1}{2w_3}\sinh(2w_3(t-t_0))},
\end{align*}
and the metric is
\begin{align*}g&=\frac{k^2}{2}e^{2\alpha (t-t_0)}\sinh(2w_3(t-t_0))w_3\left(dt^2+\frac{\sigma_3^2}{w_3^2}\right)\\
&\qquad+w_3 \coth(w_3(t-t_0))\sigma_1^2+w_3 \tanh(w_3(t-t_0))\sigma_2^2.\end{align*}
The singular point $t=t_0$ is at finite distance, so this metric is not complete.

The Ricci-flat metrics with $p_3\neq 0$, but  $p_1p_2=0$ are similar to those with $p_3=0$ and $\al=0$.
The Ricci-flat metrics with $p_1p_2p_3\neq 0$ are addressed in \cite{bgpp}.

\section{The Heisenberg Group with $p_3\ne 0$}\lb{sec:Heis}
When $p_1=p_2=0$, $p_3=1$, and $\lambda=-1$ the Lie-algebra spanned by $X_1,X_2,X_3$ is the nilpotent Heisenberg Lie-algebra.
The K\"ahler-Einstein equations are
\begin{align*}
2a'/a&=c^2, \\
2b'/b&=c^2, \\
2c'/c&=-c^2+2a^2b^2.
\end{align*}
This has two first integrals
\begin{align*}
\left(\frac{a}{b}\right)'&=0,\\
\left(ab\left(c^2-\frac{2}{3}a^2b^2\right)\right)'&=0.
\end{align*}
The first one implies that $a$ is a constant multiple of $b$.
A metric with such a property was termed {\em biaxial} in \cite{d-s1},
and {\em triaxial} if it does not have this property. In terms of
shears, the biaxial case occurs in the shear-free case, and
is generally simpler from the point of view of integrability
of the solutions. It was shown in \cite{a-m2} that there is
a complete K\"ahler-Einstein metric of this type on a manifold
admitting a cohomogeneity one action of a quotient of the Heisenberg group
by a discrete subgroup. The metric was described in that reference
via an orthonormal frame as in section~\ref{construct}.

For this reason, we will not go any further into the description
of this metric. Of the remaining Bianchi type A cases with $p_3\ne 0$,
the case of $SU(2)$ has been extensively studied. Of the remaining
cases involving the noncompact groups $SL(2,\mathbb{R})$, the Poincare
group and the Euclidean group, we describe complete metrics only
for the latter, in the next section.

\section{The Euclidean Group with $p_3\ne 0$}\lb{sec:Euc}
In this section we describe a complete triaxial K\"ahler-Einstein
metric with a cohomogeneity one action of the Euclidean group $E(2)$.
The method follows in part the work in \cite{d-s1} and \cite{d-s2}
which gives an analogous result for the case of the compact group
$SU(2)$. Two main differences in method from those references are
a systematic use of recent results of Verdiani and Ziller \cite{vz},
and the establishment of Cauchy-Schwarz type estimates yielding
completeness for this non-compact group.

We set $p_2=0$, $p_1=p_3=1$, and $\lambda=-1$.
The Lie-algebra spanned by $X_1,X_2,X_3$ is the Lie-algebra of the Euclidean group.
The K\"ahler-Einstein equations are
\begin{align}
\label{E2ODEa}2a'/a&=-a^2+c^2, \\
\label{E2ODEb}2b'/b&=a^2+c^2, \\
\label{E2ODEc}2c'/c&=a^2-c^2+2a^2b^2
\end{align}
Note that the derivatives in this system are given by polynomials in the dependent variables,
hence are locally Lipschitz, so that standard ODE theory applies. As for the symmetries
of these equations, first, they are autonomous, so constant shifts in $t$ preserve
solutions. Finally, the scaling symmetry
\[
(a(t),b(t),c(t))\to (ka(k^2t),b(k^2t),kc(k^2t)),
\]
taking solutions to solutions, will play a role later on.

\subsection{Linearization about Equilibria}
The non-zero equilibrium solutions are $(q,0,q)$ and $(0,q,0)$.
Then in terms of $a$, $b$, and $c$,
\begin{align*}
a'&=\frac{a}{2}(-a^2+c^2), \\
b'&=\frac{b}{2}(a^2+c^2), \\
c'&=\frac{c}{2}(a^2-c^2+2a^2b^2).
\end{align*}
has linearization about $(q,0,q)$
\begin{align*}
a'&=-q^2a+q^2c, \\
b'&=q^2b, \\
c'&=q^2a-q^2c.
\end{align*}
which has one positive, one negative and one zero eigenvalue. The linearization about $(0,q,0)$ has three zero eigenvalues.

\begin{thm}\lb{E2thm}
A solution of \Ref{E2ODEa}-\Ref{E2ODEc} yields a complete K\"ahler-Einstein metric
of the form \Ref{bianchiAmet} on a cohomogeneity one $E(2)$-manifold if and only if it
is a solution along an unstable curve of an equilibrium point $(q,0,q)$, $q>0$.
\end{thm}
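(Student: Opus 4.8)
The plan is to analyze the ODE system \Ref{E2ODEa}-\Ref{E2ODEc} as a dynamical system on the region $\{a,b,c>0\}$ and to pin down precisely which maximal solutions extend to complete metrics on a cohomogeneity one $E(2)$-manifold of the form \Ref{bianchiAmet}. First I would recall the local smoothness requirements at a singular orbit: since $\mathcal{H}$ is discrete, the principal orbit is $3$-dimensional and a singular orbit (where some $a,b,c\to 0$) must be $\mathcal{G}/\mathcal{K}$ with $\mathcal{K}/\mathcal{H}\cong \mathbb{S}^n$, $n=\dim\mathcal{K}-\dim\mathcal{H}$; for $E(2)$ acting with cohomogeneity one and a one-point-smaller singular orbit the relevant case is a $2$-dimensional singular orbit ($n=1$, $\mathcal{K}/\mathcal{H}\cong \mathbb{S}^1$), corresponding to exactly one of $a,b,c$ vanishing while the other two stay positive. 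Matching the metric \Ref{bianchiAmet} to the smooth orbit-space picture near such an orbit, using the Verdiani-Ziller smoothness conditions \cite{vz}, translates into boundary asymptotics for $a,b,c$ at the finite endpoint $t_0$ of the interval: one of the metric coefficients must vanish linearly in $r$ (with the correct derivative normalization to avoid a cone singularity) while the other two have nonzero finite limits. I would check that these conditions force the limiting point in $(a,b,c)$-space to be exactly an equilibrium $(q,0,q)$ with $q>0$ — the vanishing coefficient is $b$, and equations \Ref{E2ODEa}, \Ref{E2ODEc} then force $a=c$ at the limit. The equilibrium $(0,q,0)$ is ruled out because its linearization is entirely degenerate (three zero eigenvalues), so no solution emanates from it along a smooth collapsing orbit with the required rate; I would make this rigorous by examining the actual approach rate, e.g. via the first integrals or a center-manifold/blow-up argument showing the approach is not at the linear rate demanded by smoothness.

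Next, the forward side: given a solution leaving $(q,0,q)$ along the unstable curve (the $1$-dimensional unstable manifold, well defined since the linearization at $(q,0,q)$ has one positive, one zero and one negative eigenvalue), I would show the solution exists for all forward time and that the resulting metric is complete. Existence for all $t$ follows from a suitable a priori bound: the goal is to show $a,b,c$ and hence $abc$ do not blow up in finite time and that the solution stays in $\{a,b,c>0\}$, so the only way incompleteness could arise is at $t\to+\infty$. Completeness as $t\to+\infty$ is the distance estimate $\int^\infty abc\,dt=\infty$, and the remark that this is where \emph{Cauchy-Schwarz type estimates}, as in \cite{a-m2}, enter: from the ODEs one derives monotonicity/growth comparisons (e.g. controlling $(\log a+\log b)' = $ combination of $a^2,c^2$, and $(\log(abc))'$) and then bounds $abc$ from below along the flow — typically one shows $abc$ cannot decay faster than an integrable-to-infinity rate by playing the three log-derivative equations against each other with Cauchy-Schwarz. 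I would also use the scaling symmetry $(a(t),b(t),c(t))\to(ka(k^2t),b(k^2t),kc(k^2t))$ to normalize $q=1$, reducing to a single trajectory whose asymptotics can be studied concretely.

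The converse direction — that \emph{every} complete metric of the form \Ref{bianchiAmet} on such a manifold arises this way — combines the two halves: a complete such metric corresponds to a maximal solution of \Ref{E2ODEa}-\Ref{E2ODEc} on an interval $(t_-,t_+)$; near a finite endpoint it must either close up smoothly at a singular orbit (forcing that endpoint to be an approach to $(q,0,q)$, by the smoothness analysis above, hence the solution is on the unstable curve) or the manifold has no singular orbit there, in which case one shows the endpoint must be at infinite $t$ with $\int abc=\infty$; I would rule out the latter two-sided-noncompact or cylindrical scenarios by showing the ODE has no globally-defined-in-both-directions positive solution with the required integral divergence at both ends — e.g. by tracking a Lyapunov-type quantity or by the linearization data showing the only bi-infinite bounded orbits would have to connect equilibria, and $(0,q,0)$ is excluded. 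The main obstacle I anticipate is the completeness estimate as $t\to+\infty$ along the unstable curve: controlling the precise growth of $a,b,c$ well enough to conclude $\int^\infty abc\,dt$ diverges while also excluding finite-time blow-up requires the delicate Cauchy-Schwarz argument, and getting the right combination of the three equations to close the estimate (rather than a crude comparison that either over- or under-estimates $abc$) is the technical heart of the proof; a secondary subtlety is verifying the Verdiani-Ziller smoothness conditions give exactly linear vanishing of $b$ with no residual cone angle, which pins down $q>0$ as the only free parameter up to scaling.
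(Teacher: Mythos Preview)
Your outline has two genuine gaps, both in the ``forward'' (sufficiency) direction.

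First, your expectation that the solution along the unstable curve exists for all forward $t$ is wrong: in the paper's analysis the solution satisfies $b'\ge K_1 b^4$ once $b$ is large enough, so $b$ blows up at a \emph{finite} right endpoint $\eta$. Completeness in the $t$-direction therefore cannot be argued as a $t\to+\infty$ statement; one must instead show $\int^{\eta}abc\,dt=\infty$ despite $\eta<\infty$. The paper does this not by Cauchy--Schwarz but by a nullcline/comparison argument for $a/c$ as a function of $b$ (trapping $a/c$ between $1/\sqrt{1+b^2}$ and $\sqrt{k^2/(k^2+b^2)}$) followed by Gr\"onwall on $ac$, leading to $\int^\eta abc\,dt \ge \int^{\infty}\frac{K_2}{b}\,db=\infty$. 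Your proposed monotonicity of $\log(abc)$ would not by itself control this integral over a finite $t$-interval.

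Second, and more seriously, you have misidentified what the Cauchy--Schwarz estimate is for. Even after one knows that the $t$-interval has infinite $g$-length at the $\eta$ end and that the metric closes up smoothly on a bolt at the $-\infty$ end, completeness is \emph{not} yet established, because the group $E(2)$ is noncompact: a finite-length curve could in principle escape to infinity along the orbit directions (the coordinates $x,y$ in $E(2)$). The Cauchy--Schwarz type step in the paper is precisely the inequality $L(\gamma)\ge \int |g(\gamma',v)|$ applied with $v=X_1/a$ and $v=X_3/c$, together with $a,c\ge q$, which after combining the two estimates with $|\cos\theta|,|\sin\theta|\le 1$ bounds $\int x'(u)\,du$ and $\int y'(u)\,du$ by $2L(\gamma)/q$. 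This transverse-to-$t$ step has no analogue in the $SU(2)$ case and is the essential new ingredient for a noncompact group; your plan omits it entirely.
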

\begin{proof}
The proof is broken into three steps.
First, in Proposition~\ref{prop:solutions} we show that solutions with a maximal interval
having a finite left endpoint do not give complete metrics. Then we show that solutions with maximal interval of the form $(-\infty,\eta)$ are the unstable curves
of the equilibrium points $(q,0,q)$, $q>0$, and these solutions satisfy $0\le c^2-a^2 \le 2a^2b^2$.
Next, in Proposition~\ref{prop:estimates} we show that $\eta$ is finite,
but for curves on the manifold orthogonal to the orbits the endpoint corresponding to $\eta$ is infinitely far away, while the endpoint corresponding to $-\infty$ is at finite distance.
To complete the proof that this gives us a complete K\"ahler-Einstein metric we show that $g$ and $\omega$ can be extended smoothly as $t\to-\infty$, and then finish the proof of completeness by
showing all finite length curves remain inside some compact set.
These are completed in Propositions~\ref{prop:bolt} and \ref{prop:complete}, respectively.
\end{proof}

We first record in a lemma some relations,  easily verifiable via \Ref{E2ODEa}-\Ref{E2ODEc}, which will
be used later in the proof.
\begin{lemma}\lb{prelim}
For the system \Ref{E2ODEa}-\Ref{E2ODEc},
\begin{align*}
(ab)'&=abc^2, \\
(bc)'&=bca^2\left(1+b^2\right), \\
(ac)'&=a^3cb^2, \\
\left(\frac{a}{b}\right)'&=-\frac{a^3}{b}, \\
(a^2)'&=(a^2)(-a^2+c^2) \\
(a^2-c^2)'&=-(a^2-c^2)(a^2+c^2)-2 a^2b^2c^2 \\
-(c^2)'&=-c^2(a^2-c^2)-2 a^2b^2c^2
\end{align*}
\end{lemma}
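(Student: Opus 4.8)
The plan is to verify each identity in Lemma~\ref{prelim} by direct differentiation using the system \Ref{E2ODEa}--\Ref{E2ODEc}, since each assertion is an algebraic consequence of those three equations. The key organizing observation is that \Ref{E2ODEa}--\Ref{E2ODEc} can be rewritten as logarithmic-derivative relations, namely $(\log a)' = \tfrac12(-a^2+c^2)$, $(\log b)' = \tfrac12(a^2+c^2)$, and $(\log c)' = \tfrac12(a^2-c^2+2a^2b^2)$, so that for any monomial $a^i b^j c^k$ one has $(a^i b^j c^k)'/(a^i b^j c^k) = \tfrac{i}{2}(-a^2+c^2) + \tfrac{j}{2}(a^2+c^2) + \tfrac{k}{2}(a^2-c^2+2a^2b^2)$. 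Applying this with $(i,j,k)=(1,1,0),(0,1,1),(1,0,1),(1,-1,0)$ immediately yields the first four lines: for instance $(\log(ab))' = \tfrac12(-a^2+c^2)+\tfrac12(a^2+c^2) = c^2$, giving $(ab)' = abc^2$; and $(\log(a/b))' = \tfrac12(-a^2+c^2)-\tfrac12(a^2+c^2) = -a^2$, giving $(a/b)' = -a^3/b$.

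For the remaining three identities, which involve $a^2$, $a^2-c^2$ and $-c^2$, I would again use the logarithmic form: $(a^2)' = a^2(-a^2+c^2)$ is the case $(i,j,k)=(2,0,0)$. For $(a^2-c^2)'$ I would compute $(a^2)' - (c^2)' = a^2(-a^2+c^2) - c^2(a^2-c^2+2a^2b^2)$ and then simplify: the first two terms combine as $-a^4 + a^2c^2 - a^2c^2 + c^4 = -(a^2-c^2)(a^2+c^2)$ after regrouping (more precisely $-a^4 + c^4 = -(a^2-c^2)(a^2+c^2)$, and the cross terms $a^2c^2$ cancel), leaving the extra term $-2a^2b^2c^2$, which is exactly the claimed right-hand side. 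The last line $-(c^2)' = -c^2(a^2-c^2) - 2a^2b^2c^2$ is just $(c^2)' = c^2(a^2-c^2+2a^2b^2)$ rewritten, i.e. the case $(i,j,k)=(0,0,2)$ negated.

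There is no real obstacle here: every identity follows from substituting the defining ODEs and doing a short polynomial simplification, so the lemma is genuinely routine. The only points requiring minor care are the sign bookkeeping in the $(a^2-c^2)'$ computation (making sure the $a^2c^2$ cross terms cancel and that the $2a^2b^2c^2$ term carries the correct sign) and noting that the division in $(a/b)'$ is legitimate wherever $b \neq 0$, which holds on the relevant solution intervals. Since the statement merely records relations for later use in the proof of Theorem~\ref{E2thm}, it suffices to present the logarithmic-derivative computation once and then list the resulting seven identities.
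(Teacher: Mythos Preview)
Your proposal is correct and matches the paper's approach: the paper does not give a proof but simply states that the relations are ``easily verifiable via \Ref{E2ODEa}-\Ref{E2ODEc}'', which is exactly the direct substitution/logarithmic-derivative computation you carry out.
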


\subsection{Solutions}
\begin{prop}\label{prop:solutions}
There are no complete metrics corresponding to solutions of \Ref{E2ODEa}-\Ref{E2ODEc} with maximal interval $(\xi,\eta)$, when $\xi$ is finite. Furthermore, the only solutions with maximal interval $(-\infty,\eta)$ are the unstable curves of the equilibrium points $(q,0,q)$, $q>0$, and these solutions satisfy $0\le c^2-a^2\le 2a^2b^2$.
\end{prop}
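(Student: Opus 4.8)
The plan is to analyze the ODE system \Ref{E2ODEa}-\Ref{E2ODEc} via the auxiliary quantities in Lemma~\ref{prelim}, exploiting that $a$, $b$ and certain algebraic combinations are monotone. First I would establish that along any solution $a>0$, $b>0$, $c>0$ are preserved (the right-hand sides vanish when a coordinate vanishes, and the equilibria $(0,q,0)$, $(q,0,q)$ are the degenerate boundary cases), so on the interior all three functions stay positive. From $(a^2)'=a^2(-a^2+c^2)$ and $b'=\tfrac b2(a^2+c^2)$ one sees $b$ is strictly increasing, and from $(a/b)'=-a^3/b<0$ that $a/b$ is strictly decreasing and positive, hence bounded; combined with $b$ increasing this forces control on $a$.

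For the first assertion — no complete metric when $\xi$ is finite — I would argue that as $t\to\xi^+$ the metric \Ref{bianchiAmet} degenerates at finite distance. The distance from an interior point to the $\xi$-end is $\int (abc)\,dt$ over a left-neighborhood of $\xi$; I would show this integral converges. Either the solution extends continuously to $\xi$ with a (possibly singular) limit, in which case $abc$ is bounded near $\xi$ and the integral is plainly finite; or some coordinate blows up. Blow-up of $a$ or $c$ toward $t=\xi^+$ from the right is impossible since it would require the solution to have existed for $t<\xi$ against maximality, unless the blow-up is at $\xi$ itself — but the structure of the equations (e.g. $a' = \tfrac a2(c^2-a^2)$ makes large $a$ decrease) rules out finite-time blow-up as $t$ decreases, so in fact one shows the solution extends to a bounded limit at $\xi$, the end is at finite distance, and one checks (as in the explicit Bianchi examples of Sections~\ref{ric-flat}-\ref{sec:Heis}) that such a finite-distance endpoint cannot be a smooth interior point or a smoothly closed-up orbit for this group, giving incompleteness. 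This is the step I expect to be the main obstacle: pinning down precisely which limiting behaviors at a finite $\xi$ can occur and ruling out a smooth cap, since $E(2)$ is non-compact and the usual compact-group smoothness criteria (Verdiani–Ziller \cite{vz}) must be invoked carefully.

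For the second assertion, suppose the maximal interval is $(-\infty,\eta)$. As $t\to-\infty$ the solution, to exist for all negative time, must be bounded and its $\omega$-limit (as $t\to-\infty$, i.e. $\alpha$-limit) set must consist of equilibria; the only candidates with the right positivity are $(q,0,q)$ and $(0,q,0)$. The linearization computed just before the theorem shows $(0,q,0)$ has three zero eigenvalues — I would rule it out by a center-manifold / direct estimate argument showing any solution approaching it does so too slowly to have a genuine $(-\infty,\eta)$ interval with $\eta$ the relevant endpoint, or more simply by noting $b$ is strictly increasing so $b\to b_{-\infty}\ge 0$ as $t\to-\infty$ and a solution limiting on $(0,q,0)$ would need $b\to q>0$ while $a\to 0$, then tracking $a/b$; whereas near $(q,0,q)$ the one positive eigenvalue $q^2$ gives a genuine one-parameter unstable curve. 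Thus the solution lies on the unstable manifold of some $(q,0,q)$. Finally, the inequalities $0\le c^2-a^2\le 2a^2b^2$: at the equilibrium $c^2-a^2=0$ and $2a^2b^2=0$, so both hold with equality; I would show the region $\{0\le c^2-a^2\le 2a^2b^2\}$ is forward-invariant using the last two identities of Lemma~\ref{prelim}, namely $(a^2-c^2)'=-(a^2-c^2)(a^2+c^2)-2a^2b^2c^2$ and $-(c^2)'=-c^2(a^2-c^2)-2a^2b^2c^2$: on the face $c^2-a^2=0$ one computes $(c^2-a^2)'=2a^2b^2c^2\ge 0$ pointing inward, and on the face $c^2-a^2=2a^2b^2$ one computes the derivative of $c^2-a^2-2a^2b^2$ and checks its sign, using $(a^2b^2)' = 2a^2b^2 c^2$ from $(ab)'=abc^2$. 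Since the unstable curve emanates from the equilibrium into this region, it stays there for all $t$.
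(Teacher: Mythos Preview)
Your handling of the first assertion has a genuine gap. You claim that blow-up as $t\to\xi^+$ is ruled out because ``$a'=\tfrac a2(c^2-a^2)$ makes large $a$ decrease,'' and hence the solution extends to a bounded limit at $\xi$. But a bounded limit at a finite $\xi$ would contradict maximality of the interval by standard ODE continuation; so if $\xi$ is finite there \emph{must} be blow-up. And indeed there is: when $c^2-a^2<0$ one has $a'<0$, so as $t$ \emph{decreases} toward $\xi$ the function $a$ \emph{increases} and in fact $a\to\infty$ (the paper shows $a$ is decreasing and concave up on $(\xi,t_0)$, hence unbounded at the left end). Your sign reasoning is backwards in time. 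The paper does not avoid blow-up; it embraces it and carries out an asymptotic analysis near $\xi$ (obtaining $a\simeq(t-\xi)^{-1/2}$, $b,c\simeq(t-\xi)^{1/2}$ in one case, and the mirror image in the other) to conclude both that $\xi$ is finite \emph{and} that $\int_\xi^{t_0}abc\,dt<\infty$. There are two such cases, $c_0^2-a_0^2<0$ and $c_0^2-a_0^2>2a_0^2b_0^2$; neither is trivial, and this asymptotic step is the substance of the first assertion.

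For the second assertion your forward-invariance idea for the region $\{0\le c^2-a^2\le 2a^2b^2\}$ is workable in principle, but note that the equilibrium $(q,0,q)$ sits at the \emph{corner} $c^2-a^2=0=2a^2b^2$, so ``emanates into this region'' requires a separate check of the unstable direction, not just the boundary inequalities. The paper instead argues the contrapositive: starting outside the region forces $\xi$ finite (via the blow-up/asymptotic analysis above), so a solution with $\xi=-\infty$ must lie in the region for all time; then monotonicity gives convergence to an equilibrium. You correctly rule out $(0,q,0)$ via $a/b$ decreasing, but you omit ruling out $(0,0,0)$: the paper needs a Gr\"onwall argument on $d(ac)/db$ to show $ac\not\to 0$ as $b\to 0$, hence $q>0$.
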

\begin{proof}
For an initial time $t_0$, let $(\xi,\eta)$ be a maximal solution interval for the initial value problem for \Ref{E2ODEa}-\Ref{E2ODEc} with
$a(t_0)=a_0$, $b(t_0)=b_0$, and $c(t_0)=c_0$.

Uniqueness of solutions to \Ref{E2ODEa}-\Ref{E2ODEc} implies that if any of
$a$, $b$, or $c$ are zero anywhere in $(\xi,\eta)$ then it is zero everywhere.
Accordingly we assume that $a$, $b$, and $c$ are all positive on $(\xi,\eta)$.
Then we see from Lemma~\ref{prelim} and \Ref{E2ODEb} that $ab$, $bc$, $ac$, and $b$ are all
increasing on $(\xi,\eta)$.

We consider the following cases:
\subsubsection*{Case 1: $c_0^2-a_0^2<0$} We first make the following claim.\\[4pt]
Claim: In this case $a\to\infty$ as $t\to\xi^+$.\\[3pt]
\textit{Proof of claim:} Since
\[ (c^2-a^2)'=-(c^2-a^2)(c^2+a^2)+2a^2b^2c^2, \]
if $c^2-a^2<0$ then $(c^2-a^2)'>0$, thus $c^2-a^2<0$ for all $\xi<t<t_0$.
Therefore,
\begin{align*}
a' &= \frac{a}{2}(c^2-a^2), \\
a'' &= \frac{a}{4}[(c^2-a^2)^2-2(c^2-a^2)(c^2+a^2)+4a^2b^2c^2],
\end{align*}
showing that $a$ is decreasing and concave up on $(\xi,t_0)$.
Next, we always have $b'>0$, while
\[c'=\frac{c}{2}(a^2-c^2+2a^2b^2)>0, \]
shows that $c$ is increasing on $(\xi,t_0)$.
Therefore $b$ and $c$ are bounded on $(\xi,t_0)$.
Thus, as $(\xi,\eta)$ is the maximal solution interval,
$a$ could be bounded as $t\to\xi^+$ only if $\xi=-\infty$.
But since $a$ is concave up, $a\to\infty$ as $t\to\xi^+$ even when $\xi=-\infty$. \qed

Since $ab$ and $ac$ are increasing, they are bounded as $t\to\xi^+$ and $a\to\infty$, so $b\to0$, $c\to0$, and $ab\to k$ for some constant $k$.
Then as $t\to\xi^+$ the equations will take the asymptotic form
\begin{align*}
a'&=-\frac{1}{2}a^3 \\
b'&=\frac{1}{2}ba^2 \\
c'&=\frac{1}{2}c(a^2+2k^2) \\
\end{align*}
the solution of which has asymptotic form
\begin{align*}
a&\simeq (t-\xi)^{-\frac{1}{2}},\\
b&\simeq b_1(t-\xi)^{\frac{1}{2}},\\
c&\simeq c_1(t-\xi)^{\frac{1}{2}},\\
\end{align*}
for some constants $b_1$ and $c_1$.
This shows that $\xi$ is finite in this case and
\[ \int_{\xi}^{t_0} abc\,dt <\infty,\]
so the metric is not complete.

\subsubsection*{Case 2: $c_0^2-a_0^2>2a_0^2b_0^2$}Here we have a similar claim.\\[4pt]
Claim: In this case $c\to\infty$ as $t\to\xi^+$.\\[3pt]
\textit{Proof of claim:} Since
\[ (c^2-a^2-2a^2b^2)'=-(c^2-a^2)(c^2+a^2)-2a^2b^2c^2, \]
if $c^2-a^2-2a^2b^2>0$ then $(c^2-a^2-2a^2b^2)'<0$, thus in this case, $c^2-a^2>2a^2b^2$ for all $\xi<t\le t_0$.
Therefore,
\begin{align*}
c' &= \frac{c}{2}(a^2-c^2+2a^2b^2), \\
c'' &= \frac{c}{4}[(a^2-c^2+2a^2b^2)^2+2(c^2-a^2)(c^2+a^2)+4a^2b^2c^2],
\end{align*}
showing that $c$ is decreasing and concave up on $(\xi,t_0)$.
Next, we always have $b'>0$, while
$a$ is increasing on $(\xi,t_0)$.
Therefore $a$ and $b$ are bounded on $(\xi,t_0)$.
Thus, as $(\xi,\eta)$ is the maximal solution interval,
$c$ could be bounded as $t\to\xi^+$ only if $\xi=-\infty$.
But since $c$ is concave up, $c\to\infty$ as $t\to\xi^+$
even when $\xi=-\infty$. \qed

Since $ac$ and $bc$ are increasing, they are bounded as $t\to\xi^+$ and $c\to\infty$, so $a\to0$, $b\to0$.
Then as $t\to\xi^+$ the equations will take the asymptotic form
\begin{align*}
a'&=\frac{1}{2}ac^2 \\
b'&=\frac{1}{2}bc^2 \\
c'&=-\frac{1}{2}c^3 \\
\end{align*}
which has solution
\begin{align*}
a&\simeq a_1(t-\xi)^{\frac{1}{2}}\\
b&\simeq b_1(t-\xi)^{\frac{1}{2}}\\
c&\simeq (t-\xi)^{-\frac{1}{2}}\\
\end{align*}
for some constants $a_1$ and $b_1$.
This shows that $\xi$ is finite in this case and
\[ \int_{\xi}^{t_0} abc\,dt <\infty,\]
so the metric is not complete.

If $c^2-a^2<0$ or $c^2-a^2>2a^2b^2$ at any time, then a constant shift in $t$ will give one of the previous cases.
In both previous cases, $\xi$ is finite, but we know that the unstable curve of the equilibrium points $(q,0,q)$ must have $\xi=-\infty$.
The existence of these curves is guaranteed by the center manifold theorem.
Therefore we consider the final case:

\subsubsection*{Case 3: $0\le c^2-a^2 \le 2a^2b^2\textnormal{ for all }t\in(\xi,\eta)$} Here we
have a different claim.\\[4pt]
Claim: In this case $\xi=-\infty$. \\[3pt]
\textit{Proof of claim:}
In this case $a$, $b$, and $c$ are all increasing, therefore they are all bounded on $(\xi,t_0)$.
Since $(\xi,\eta)$ is the maximal solution interval $\xi=-\infty$. \qed

As $a$, $b$, and $c$ are all increasing, it must be that they all approach finite non-negative limits as $t\to-\infty$.
Thus $(a,b,c)$ must approach an equilibrium point.
If $(a,b,c)\to(0,q,0)$ with $q>0$, then $a/b\to0$ as $t\to-\infty$, but $a/b$ is decreasing and positive
(see Lemma~\ref{prelim}), so this cannot happen.

Therefore, when $t\to-\infty$ we see that $(a,b,c)\to(q,0,q)$ in this case, but we still need to rule out
the possibility that $q=0$. For this we compute the variation of $ac$ with respect to $b$:
\begin{align}
\frac{d(ac)}{db}&=\frac{2(\frac{a}{c})^2(ac)b}{(\frac{a}{c})^2+1}.\lb{ac-b}
\end{align}
Our assumption of an equilibrium point $(q,0,q)$ implies that $a/c\to 1$ when $b\to 0$.
Since in our case $c^2-a^2\ge 0$, we have
\[ \frac{a}{c}\le 1. \]
Employing this in equation \Ref{ac-b} yields
\[
\frac{d(ac)}{db}\le (ac)b.
\]
By Gr\"onwall's inequality, if $ac\to 0$ when $b\to 0$ then $ac=0$ identically. As the latter
is not possible (see the beginning of the proof), neither is $q=0$.
\end{proof}

\begin{prop}\label{prop:estimates}
Let $g$ be a Riemannian metric of the form \Ref{bianchiAmet} on a manifold $M$, with $a$, $b$, $c$ a solution to \Ref{E2ODEa}-\Ref{E2ODEc} along an unstable curve of an equilibrium point $(q,0,q)$, $q>0$,
having maximal domain $I=(-\infty,\eta)$. Assume that the latter interval is also the range of the coordinate function $t$ on $M$.
For a point $p_0\in M$ with orbit through $p_0$ of principal type and $M^t$ a level set of $t$,
\[
\lim_{t\to -\infty}d_g(p_0,M^t)<\infty,\qquad
\lim_{t\to \eta}d_g(p_0,M^t)=\infty,
\]
where $d_g$ is the distance function induced by $g$.
\end{prop}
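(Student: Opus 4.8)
\emph{Identification of the distance with an integral.} The plan is to show that $d_g(p_0,M^t)=\bigl|\int_{t_0}^{t}abc\,ds\bigr|$ with $t_0:=t(p_0)$, and then to estimate this integral near each end of $I$. Since on the range $(-\infty,\eta)$ of $t$ every orbit is principal, the orbit bundle $M\to(-\infty,\eta)$ is a trivial fiber bundle, so $M\cong(-\infty,\eta)\times P$ for a principal orbit $P$; writing $p_0=(t_0,q_0)$, the curve $s\mapsto(s,q_0)$ joins $p_0$ to a point of $M^t$ and has velocity $\partial_t$, of $g$-norm $abc$ by \Ref{bianchiAmet}, which gives $d_g(p_0,M^t)\le\bigl|\int_{t_0}^t abc\,ds\bigr|$. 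For the reverse inequality I set $r:=\ta/\sqrt2$: by \Ref{grad} and the relation $d\ta=\hat\kk-\hat\tT=\sqrt2\,abc\,dt$ recorded just before \Ref{framemet}, $\n r=\tfrac1{\sqrt2}(\kk-\tT)$ has unit length, so $r$ is $1$-Lipschitz for $d_g$, and since it is a function of $t$ alone it is constant on $M^t$; hence $d_g(p_0,M^t)\ge|r(t)-r(t_0)|=\bigl|\int_{t_0}^t abc\,ds\bigr|$. Thus the two assertions of the proposition are equivalent to $\int_{-\infty}^{t_0}abc\,ds<\infty$ and $\int_{t_0}^{\eta}abc\,ds=+\infty$.

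\emph{The end $t\to-\infty$.} By Proposition~\ref{prop:solutions} the solution lies in the region $0\le c^2-a^2\le 2a^2b^2$, on which \Ref{E2ODEa}-\Ref{E2ODEc} make $a,b,c$ all increasing; combined with $(a,b,c)\to(q,0,q)$ this gives $q\le a,c\le 2q$ on some interval $(-\infty,T]$, and then \Ref{E2ODEb} yields $(\ln b)'=\tfrac12(a^2+c^2)\ge q^2$ there, so $b(s)\le b(T)e^{q^2(s-T)}$ decays exponentially as $s\to-\infty$. Hence $\int_{-\infty}^{t_0}abc\,ds\le\int_{-\infty}^{T}4q^2b\,ds+\int_{T}^{t_0}abc\,ds<\infty$.

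\emph{The end $t\to\eta$.} The crux is that $ab$, and hence $b$, tends to $+\infty$: $ab$ increases since $(ab)'=abc^2>0$, and if it stayed bounded then $\int c^2\,dt<\infty$, which (as $c\ge q$) forces $\eta<\infty$, after which Lemma~\ref{prelim} together with $a\ge q$ makes $a,b,c$ all bounded on $(t_0,\eta)$, so the solution would extend past $\eta$, a contradiction; and then $b\to\infty$ because $a/b$ decreases, as $(a/b)'=-a^3/b<0$. In particular $s:=a^2b^2$ increases to $+\infty$. Now Lemma~\ref{prelim} gives the identity $(a^2b^2)'=2(ab)(abc^2)=2(abc)^2$, so changing the integration variable to $s$ on an interval $(t_1,\eta)$ on which $b\ge1$ turns $\int_{t_1}^{\eta}abc\,dt$ into $\int_{s_1}^{\infty}\tfrac{ds}{2abc}$; moreover $c^2-a^2\le 2a^2b^2$ gives $c^2\le a^2b^2(b^{-2}+2)\le 3a^2b^2$ once $b\ge1$, i.e.\ $abc=\sqrt{a^2b^2}\,c\le\sqrt3\,a^2b^2=\sqrt3\,s$. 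Therefore $\int_{s_1}^{\infty}\tfrac{ds}{2abc}\ge\int_{s_1}^{\infty}\tfrac{ds}{2\sqrt3\,s}=+\infty$, which yields $\int_{t_0}^{\eta}abc\,ds=+\infty$.

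\emph{Expected main difficulty.} All the substance is in the last step: $abc$ blows up as $t\to\eta$, so divergence of $\int abc\,dt$ is not free and must be extracted from the equations. The mechanism that accomplishes this is to integrate against the exact identity $(a^2b^2)'=2(abc)^2$ and then to use the sharp constraint $c^2\le a^2(1+2b^2)$ of Proposition~\ref{prop:solutions} to bound $abc$ linearly in $s=a^2b^2$, reducing matters to the logarithmic divergence of $\int ds/s$. The subsidiary point requiring care is verifying, purely from the monotonicity relations of Lemma~\ref{prelim}, that $ab$ and hence $b$ genuinely tend to infinity.
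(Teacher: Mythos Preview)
Your argument is correct. The identification of the distance with $\bigl|\int abc\,dt\bigr|$ and the treatment of the end $t\to-\infty$ match the paper's reasoning. The interesting divergence is at the other end.

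For $t\to\eta$ the paper takes a rather different route. It studies the flow of $a/c$ as a function of $b$ via
\[
\frac{d(a/c)}{db}=\frac{2(a/c)}{b}\,\frac{1-(a/c)^2(1+b^2)}{(a/c)^2+1},
\]
uses a nullcline/comparison argument to trap $a/c$ between $\sqrt{1/(1+b^2)}$ and $\sqrt{k^2/(k^2+b^2)}$, and then changes variable to $b$ to obtain
\[
\int_{t_0}^{\eta} abc\,dt=\int_{b_0}^{\infty}\frac{2}{a/c+c/a}\,db\ \ge\ \int_{b_0}^{\infty}\frac{K_2}{b}\,db=\infty.
\]
Your approach bypasses this analysis entirely: you use the exact identity $(a^2b^2)'=2(abc)^2$ to change variable to $s=a^2b^2$, and then the constraint $c^2\le a^2(1+2b^2)$ alone yields $abc\le\sqrt3\,s$ once $b\ge1$, giving the same logarithmic divergence $\int ds/s$. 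This is shorter and more elementary; the price is only that you must first verify $ab\to\infty$ (hence $b\to\infty$), which you do cleanly by the bounded--solution--extends contradiction.

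One remark for context: the paper's detour through the $a/c$ estimates also yields $b'\ge K_1 b^4$ and hence $\eta<\infty$, a fact invoked later in the smoothness analysis at the bolt (Proposition~\ref{prop:bolt}). Your proof of the proposition as stated does not produce this byproduct, so if you were writing the full section you would need to supply $\eta<\infty$ separately.
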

\begin{proof}
The union of the principal orbits forms an open dense set, $\tilde{M}$, so that
$\tilde{M}/\mathcal{G}$ is a smooth manifold of dimension 1. The function $t$ is
a smooth submersion from $\tilde{M}$ to $\tilde{M}/\mathcal{G}$. The metric
\[ (abc)^2dt^2\]
makes this into a Riemannian submersion.
The level sets of $t$ are orbits of $\mathcal{G}$ and for $t_0=t(p_0)$
\[d_g(p_0,M^{t_1}) = d_g(M^{t_0},M^{t_1}), \]
is the distance in the quotient manifold where
\[ d_g(M^{t_0},M^{t_1}) = \left|\int_{t_0}^{t_1} abc dt\right|. \]

Asymptotically as $t\to-\infty$,
\begin{align*}
a &\simeq q\\
b &\simeq ke^{q^2t}\\
c &\simeq q
\end{align*}
This gives the asymptotic metric
\[ g\simeq k^2q^4e^{2q^2t}dt^2+q^2\sigma_1^2+k^2e^{2q^2t}\sigma_2^2+q^2\sigma_3^2, \]
and for $v=ke^{q^2t}$ this is just
\[ g\simeq (dv^2+v^2\sigma_2^2)+q^2(\sigma_1^2+\sigma_3^2). \]
In this coordinate, the endpoint $\xi=-\infty$ is at $v=0$, and we see that
\[ \lim_{t\to -\infty}d_g(p_0,M^t) = \int_{-\infty}^{t_0} abc dt = \int_0^{v_0} dv < \infty. \]

Now to understand the behavior at the $\eta$ side of the solution interval,
we examine the derivative of $a/c$ with respect to $b$
\be\lb{a/c-b}
\frac{d(\frac{a}{c})}{db}=
\frac{2(\frac{a}{c})}{b}\left(\frac{1-(\frac{a}{c})^2(1+b^2)}{(\frac{a}{c})^2+1}\right),
\end{equation}
This equation has nullcline
\[\frac{a}{c}=\sqrt{\frac{1}{1+b^2}},\]
Since the nullcline is always decreasing, and our solution starts at $\frac{a}{c}=1$ when $b=0$, we have
\[ \frac{a}{c}\ge\sqrt{\frac{1}{1+b^2}}. \]

To find a better upper bound than $1$ for $a/c$, we consider the curve 
$\frac ac=\sqrt{\fr{k^2}{k^2+b^2}}$ and plug its expression into the slope
field. This gives slope
\[
\fr{2k^2+2b^2}{2k^2+b^2}(k^2-1)\fr d{db}\sqrt{\fr{k^2}{k^2+b^2}}.
\]
So for $k^2>2$, the slope of the solutions along this curve are less, i.e. more negative,
than the slope of the curve. Since for our (non-equilibrium) solution, there is some $b_p$
in the range of $b$ for which one has $(\fr ac)(b_p)<1$, the graph of $a/c$
is below the graph of $\sqrt{\frac{k^2}{k^2+b^2}}$ for some $k=k_p>2$ at $b_p$, and hence for
all $b\ge b_p$. Therefore for $b\ge b_p$
\[ \sqrt{\frac{1}{1+b^2}}\le \frac{a}{c}\le\sqrt{\frac{k_p^2}{k_p^2+b^2}} \]
Next we deduce an estimate for $ac$ in terms of $b$, for $b>b_p$. Using \Ref{ac-b} and the last inequalities, we have
\[ \frac{2b}{2+b^2} \le \frac{d(\ln(ac)}{db} \le \frac{2bk_p^2}{2k_p^2+b^2}, \]
and integrating this from $b=0$ to $b$, exponentiating and multiplying by $q^2$ gives
\[ \ln\frac{2+b^2}{2} \le \ln\frac{ac}{q^2} \le k_p^2\ln\frac{2k_p^2+b^2}{2k_p^2}. \]
Therefore
\[ q^2\frac{2+b^2}{2} \le ac \le q^2\left(\frac{2k_p^2+b^2}{2k_p^2}\right)^{k_p^2}. \]
Finally, these can be used to estimate $b$ as
\[ b' = \frac{b}{2}(a^2+c^2) =\frac{b}{2}(ac)\left(\frac{a}{c}+\frac{c}{a}\right).\]
Therefore, for some positive constant $K_1$ and $b = b(t) > b_p$,
\[ b'\ge K_1b^4,\]
showing that $\eta$ is finite, but for some constant $K_2>0$, using the notation
$b_0 := max\{b(t_0),b_p \} > 0$, we have
\begin{equation}\label{eta_dist} \lim_{t\to \eta}d_g(p_0,M^t)= \int_{t_0}^\eta abc\, dt \ge\int_{b_0}^\infty \frac{2}{\frac{a}{c}+\frac{c}{a}}\,db \ge \int_{b_0}^\infty\frac{K_2}{b} db=\infty. \end{equation}
\end{proof}

\subsection{The Bolt}
The phrase ``attaching a bolt" refers to replacing a $4$-manifold with a cohomogeneity one action
with only regular fibers over an open interval with one admitting a similar action for the same group
over a semi-closed interval with a two dimensional singular fiber (the bolt) over the endpoint of the interval.
For the case at hand, the latter $4$-manifold can be described as
\[
E(2)\times_{SO(2)}\mathbb{R}^2= (0,\infty)\times E(2)\ \amalg\ \{0\}\times \mathbb{R}^2 ,
\]
where the right $SO(2)$-action is $(g, (T,x))\to (Tg,g^{-1}x)$.
\begin{prop}\label{prop:bolt}
The metric and K\"ahler form corresponding to solutions of \Ref{E2ODEa}-\Ref{E2ODEc} along the unstable curves of the equilibrium points $(q,0,q)$, $q>0$, defined on $(-\infty, \eta)$,
can be smoothly extended to $E(2)\times_{SO(2)}\mathbb{R}^2$, with the bolt fibering over $\xi=-\infty$.
\end{prop}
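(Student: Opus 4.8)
The plan is to reparametrize the solution by the geodesic distance to the singular orbit, extend the metric functions smoothly across the singular value, and then check the smoothness conditions of Verdiani--Ziller \cite{vz} for a cohomogeneity one metric near a two-dimensional singular orbit along which a circle collapses.

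\textbf{Step 1: set-up and the smoothness criterion.} With $p_1=p_3=1$ and $p_2=0$, the relations \Ref{X123} give $[X_2,X_3]=-X_1$ and $[X_2,X_1]=X_3$, so $\mathrm{ad}_{X_2}$ rotates $\langle X_1,X_3\rangle$; hence the isotropy $SO(2)$ of the singular orbit is generated by $X_2$, the collapsing normal circle is the $\sigma_2$-direction, the bolt is $E(2)/SO(2)\cong\mathbb{R}^2$, and $b$ is the collapsing metric coefficient. By Proposition~\ref{prop:estimates} the bolt lies at finite distance, so set $s=\int_{-\infty}^{t}abc\,dt'$; then $ds=abc\,dt$, $s\to 0^+$ as $t\to-\infty$, and $g=ds^2+a^2\sigma_1^2+b^2\sigma_2^2+c^2\sigma_3^2$ with $a,b,c$ now viewed as functions of $s>0$. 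In this situation the criterion of \cite{vz} states that $g$ extends to a smooth metric on $E(2)\times_{SO(2)}\mathbb{R}^2$, with the bolt fibering over $\xi=-\infty$, precisely when $a(0)=c(0)>0$, the functions $a^2$ and $c^2$ extend to smooth even functions of $s$, and $b$ extends to a smooth odd function of $s$ with $\frac{db}{ds}(0)=1$.

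\textbf{Step 2: analytic extension and parity.} Rewriting \Ref{E2ODEa}--\Ref{E2ODEc} in the variable $s$ yields
\[
\frac{da}{ds}=\frac{c^2-a^2}{2bc},\qquad \frac{db}{ds}=\frac{a^2+c^2}{2ac},\qquad \frac{dc}{ds}=\frac{a^2-c^2+2a^2b^2}{2ab},
\]
a system invariant under $(s,a,b,c)\mapsto(-s,a,-b,c)$, which reflects the symmetry $(a,b,c)\mapsto(a,-b,c)$ of \Ref{E2ODEa}--\Ref{E2ODEc} (this leaves $g$ unchanged, as $b$ enters only through $b^2$). By Proposition~\ref{prop:solutions} the solution traverses the one-dimensional unstable manifold of $(q,0,q)$, with $a(0)=c(0)=q$ and $b(0)=0$. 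Since the vector field \Ref{E2ODEa}--\Ref{E2ODEc} is polynomial and its single unstable eigenvalue $q^2$ is non-resonant, this unstable manifold is real-analytic and carries an analytic linearizing coordinate $u$ along which the flow is $t\mapsto e^{q^2 t}u$; thus $a,b,c$ are analytic functions of $u$ near $u=0$. The involution $(a,b,c)\mapsto(a,-b,c)$ commutes with the flow and fixes the equilibrium with derivative $-1$ on the unstable eigenline, so it acts as $u\mapsto -u$; hence $a,c$ are even and $b$ is odd in $u$. Consequently $abc$ is odd in $u$, so $\frac{ds}{du}=\frac{abc}{q^2 u}$ is even and analytic, $s(u)$ is odd and analytic with $s'(0)\neq0$, and inverting it makes $a^2$ and $c^2$ even and $b$ odd as smooth functions of $s$. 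Finally $\frac{db}{ds}(0)=\frac{a^2+c^2}{2ac}\big|_{s=0}=1$ (consistently $\frac{da}{ds},\frac{dc}{ds}\to0$, since $0\le c^2-a^2\le 2a^2b^2=O(s^2)$ by Proposition~\ref{prop:solutions}), so all conditions of Step 1 hold and $g$ extends smoothly. For the K\"ahler form, \Ref{kahlerform} gives $\omega=abc^2\,dt\wedge\sigma_3+ab\,\sigma_1\wedge\sigma_2=c\,ds\wedge\sigma_3+ab\,\sigma_1\wedge\sigma_2$, and since $c$ is even and $ab$ is odd in $s$ with $ab=qs+O(s^3)$, $\omega$ likewise extends smoothly (equivalently, $J$ extends smoothly, e.g. $J\partial_s=\tfrac1c X_3$ with $\tfrac1c$ even, so $\omega=g(J\cdot,\cdot)$ is smooth).

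The main obstacle is Step 2: upgrading the leading asymptotics already recorded in Proposition~\ref{prop:estimates} to honest $C^\infty$ (indeed real-analytic) behavior of $g$ and $\omega$ across the bolt. What makes this work is the interplay between the analyticity of the unstable manifold, which supplies a convergent expansion in the linearizing variable $u$, and the reflection symmetry of \Ref{E2ODEa}--\Ref{E2ODEc}, which forces the required even/odd parities. The delicate point is that the change of variables $u\leftrightarrow s$ is singular at the bolt, so one must verify, as indicated above, that it is an odd analytic diffeomorphism near $0$, so that the parities — and hence the smoothness conditions — transfer to the geometrically meaningful distance variable $s$.
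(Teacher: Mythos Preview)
Your proof is correct and uses the same overall framework as the paper---reparametrize by arc length and apply the Verdiani--Ziller criterion---but the crucial step is handled differently. The paper writes the system in the $r$-variable (your $s$), observes that it is formally invariant under $(r,a,b,c)\mapsto(-r,a,-b,c)$, and simply asserts that ``from these it is seen that $a,b,c$ can be extended at $r=0$ so that $a$ and $c$ are even and $b$ is odd''; it then verifies $b'(0)=1$ and the Table~B conditions $a^2+c^2=\phi_1(r^2)$, $a^2-c^2=r^2\phi_2(r^2)$ by direct manipulation of the equations. You instead exploit the analyticity of the one-dimensional strong unstable manifold of the polynomial field at $(q,0,q)$, choose an analytic linearizing coordinate $u$ on it, and use that the involution $(a,b,c)\mapsto(a,-b,c)$ commutes with the flow (hence acts as $u\mapsto -u$) to force the parities; you then check that $s(u)$ is an odd analytic diffeomorphism so the parities transfer to $s$. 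This is more conceptual and arguably more rigorous than the paper's assertion, since the $r$-system is singular at $r=0$; it also generalizes cleanly to other polynomial systems with the same symmetry pattern. For the K\"ahler form, the paper carries out the full eigenspace decomposition of $\Lambda^2T_p^*M$ under $SO(2)$ and checks the equivariance conditions $cr+ab=r\phi_2(r^2)$ and $cr-ab=r^3\phi_3(r^2)$ explicitly; your observation that $c$ is even and $ab$ is odd with $(ab)'(0)=q=c(0)$ is exactly what is needed to verify these, though your parenthetical about $J$ extending (checking only $J\partial_s$) is not by itself a complete argument and is better read as motivation rather than proof.
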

\begin{proof}
Consider the manifold $M=E(2)\times_{SO(2)}\mathbb{R}^2$.  This has a left action by $E(2)$ with regular orbit $E(2)$ and singular orbit $E(2)/SO(2)$.
For any $E(2)$ invariant metric $g$ on $M$, with $r$ the distance along a geodesic perpendicular to the singular orbit,
\[ g = dr^2+g_r. \]
For a metric $g$ of the form \Ref{bianchiAmet}, let $r=\int_{-\infty}^t a(s)b(s)c(s)\,ds$, then
\[ g = dr^2+a^2\sigma_1^2+b^2\sigma_2^2+c^2\sigma_3^2. \]
The ODE's \Ref{E2ODEa}-\Ref{E2ODEc} in this coordinate become
\begin{align}
\label{rODEa}\frac{da}{dr}&=\frac{a}{2}\left(-\frac{a}{bc}+\frac{c}{ab}\right),\\
\label{rODEb}\frac{db}{dr}&=\frac{1}{2}\left(\frac{a}{c}+\frac{c}{a}\right),\\
\label{rODEc}\frac{dc}{dr}&=\frac{c}{2}\left(\frac{a}{bc}-\frac{c}{ab}+2\frac{ab}{c}\right).
\end{align}
From these it is seen that $a,b,$ and $c$ can be extended at $r=0$ so that $a$ and $c$ are even and $b$ is odd.
Following the notations of Verdiani and Ziller \cite{vz}, the tangent space for $r\neq 0$ splits as
\[ T_p M = \mathbb{R}\partial_r\oplus \mathfrak{k}\oplus\mathfrak{m}, \]
where
\[\mathfrak{k}=\mathrm{span}\{X_2\}, \]
\[ \mathfrak{m}=\mathrm{span}\{X_1,X_3\}=:\ell_1, \]
and we set
\[ V=\mathrm{span}\{\partial_r,X_2\}=:\ell_{-1}'. \]
Since $\exp(\theta X_2)$ acts on both $V$ and $\mathfrak{m}$ as a rotation by $\theta$, we have weights $a_1=d_1=1$.
The smoothness conditions for $V$ is that $b$ can be extended to an odd function and $b'(0)=1$.
Since we know that $b$ can be extended to be odd, we just check from \Ref{rODEb} that
\[ \left.\frac{db}{dr}\right|_{r=0}=\frac{1}{2}\left(\frac{q}{q}+\frac{q}{q}\right)=1.\]
Since $\ell'_{-1}$ and $\ell_1$ are perpendicular, the smoothness conditions in table C of
\cite{vz} are automatically satisfied, while those in
table B there, are
\begin{align}\label{smooth1}a^2+c^2&=\phi_1(r^2),\\
\label{smooth2}a^2-c^2&=r^2\phi_2(r^2),
\end{align}
for some smooth functions $\phi_1$ and $\phi_2$.
Now to see that \Ref{smooth1} is satisfied, note that
\[ a^2+c^2=2ac\frac{db}{dr}.\]
Since $a$, $c$, and $\frac{db}{dr}$ are even, it just remains to check \Ref{smooth2}.
As $a$, $c$ are even while $b$ is odd, $a/c$ is even as a function of $b$.
We have to second order
\[ a^2-c^2=c^2\left(\frac{a^2}{c^2}-1\right)\propto b^2c^2, \]
and since $b(0)=0$ and $\frac{db}{dr}|_{r=0}=1$, we get that $g$ extends to a smooth metric on $M$.

Finally we check that the K\"ahler form extends across the singular orbit at $r=0$.
Following Verdiani and Ziller we analyze the eigenspaces for the action of $SO(2)$ on $T_pM$.
We find that $\partial_r+\frac{i}{r}X_2$ is an eigenvector with eigenvalue $e^{ia_1\theta}$, and
$X_1+iX_3$ is an eigenvector with eigenvalue $e^{id_1\theta}$, and likewise for their complex conjugates.
Dualizing gives eigenspaces of $T^*_pM$: $dr-ir \sigma_2$ has eigenvalue $e^{ia_1\theta}$, and $\sigma_1-i\sigma_3$ has eigenvalue  $e^{id_1\theta}$.
Thus the eigenspaces of $\Lambda^2T^*_pM$ are
\begin{align*}
E_1 &= \mathrm{span}\{r dr\wedge \sigma_2, \sigma_1\wedge\sigma_3\}\\
E_{e^{i(a_1-d_1)\theta}}&=\mathrm{span}\{dr\wedge\sigma_1+r\sigma_2\wedge\sigma_3+i(dr\wedge\sigma_3+r\sigma_1\wedge\sigma_2)\}\\
E_{e^{i(a_1+d_1)\theta}}&=\mathrm{span}\{dr\wedge\sigma_1-r\sigma_2\wedge\sigma_3+i(dr\wedge\sigma_3-r\sigma_1\wedge\sigma_2)\}\\
&\qquad
\end{align*}
The smoothness condition is just the equivariance condition $\omega(e^{a_1\theta}p)=\exp(\theta X_2)^*\omega$.
This requires that the coefficient of
\begin{align}
\label{eigen1} E_1 & \text{ is } \phi_1(r^2), \\
\label{eigen2} E_{e^{\pm i(a_1-d_1)\theta}}& \text{ is } r^{\frac{|a_1-d_1|}{a_1}}\phi_2(r^2),\\
\label{eigen3} E_{e^{\pm i(a_1+d_1)\theta}}& \text{ is } r^{\frac{|a_1+d_1|}{a_1}}\phi_3(r^2).
\end{align}
Now we have
\begin{align*}\omega&=cdr\wedge\sigma_3+ab\sigma_1\wedge\sigma_3\\
&=\frac{c}{2}[(dr\wedge\sigma_3+r\sigma_1\wedge\sigma_2)+(dr\wedge\sigma_3-r\sigma_1\wedge\sigma_2)]\\
&\qquad + \frac{ab}{2r}[(dr\wedge\sigma_3+r\sigma_1\wedge\sigma_2)-(dr\wedge\sigma_3-r\sigma_1\wedge\sigma_2)]\\
&=\frac{cr+ab}{2r}(dr\wedge\sigma_3+r\sigma_1\wedge\sigma_2) \\
&\qquad + \frac{cr-ab}{2r}(dr\wedge\sigma_3-r\sigma_1\wedge\sigma_2).
\end{align*}
Using $a_1=d_1=1$ in \Ref{eigen2}-\Ref{eigen3}, the smoothness conditions can now be written as
\begin{align*}
cr+ab &= r\phi_2(r^2), \\
cr-ab &= r^3\phi_3(r^2). \\
\end{align*}
The first of these is clear; for the second, expand to get $a/c= 1+\al b^2+\mathcal{O}(b^4)$
for some constant $\al$ and $b= r+\mathcal{O}(r^3)$, so
\[ cr\left(1-\frac{ab}{cr}\right)=cr\left(1-\frac{b+\al b^3
+\mathcal{O}(b^5)}{r}\right)=r^3\phi_3(r^2).\]
Therefore $\omega$ extends as a smooth form on all of $M$.
\end{proof}

\subsection{Completeness}
\begin{prop}\label{prop:complete}
For the metrics of Proposition~\ref{prop:bolt},
all finite length curves remain inside some compact set.
\end{prop}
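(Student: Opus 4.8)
Let $\gamma\colon[0,L)\to M$ be a finite-length curve; we may assume it is unit-speed, and we must show its image lies in a compact set. First I would fix global coordinates: since $E(2)=\mathbb R^2\rtimes SO(2)$, the manifold $M=E(2)\times_{SO(2)}\mathbb R^2$ is diffeomorphic to $\mathbb R^2\times\mathbb R^2$, with $\{X_1,X_3\}$ spanning the translational subalgebra, $X_2$ the rotation generator, the bolt equal to $\mathbb R^2\times\{0\}$, and the translational subgroup $\mathbb R^2\lhd E(2)$ acting by translation on the first factor. Let $\pi\colon M\to\mathbb R^2$ be the projection to the first factor. Since $g=dr^2+g_r\ge dr^2$, the coordinate $r$ of Proposition~\ref{prop:bolt} is $1$-Lipschitz along $\gamma$, so $r\circ\gamma$ stays in a bounded interval $[0,R]$; as $r$ is a proper increasing function of $|y|$ (here using $\int_{-\infty}^{\eta}abc\,dt=\infty$ from Proposition~\ref{prop:estimates}), the set $\{r\le R\}$ is bounded in the $y$-directions, and $\gamma\subset\{r\le R\}$.

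The crux is to show that $\pi$ is $\tfrac1q$-Lipschitz on $\{r\le R\}$. Two inputs enter. (a) On $\{r\le R\}$ one has $a\ge q$ and $c\ge q$: since the solution underlying $\gamma$ is an unstable curve of $(q,0,q)$ we have $a,c\to q$ as $r\to0^+$, and from \Ref{rODEa}, \Ref{rODEc} together with the inequality $0\le c^2-a^2\le 2a^2b^2$ of Proposition~\ref{prop:solutions} one checks $\frac{da}{dr}\ge0$ and $\frac{dc}{dr}\ge0$, so $a$ and $c$ are nondecreasing in $r$. (b) With the orbit identifications chosen so that their base points lie on the normal geodesic through a bolt point of $\pi^{-1}(0)$, the resulting coordinates are precisely those in which $g=dr^2+g_r$, with $\pi$ given by the two translational coordinates; then $d\pi(\partial_r)=0$, and on each orbit the two components of $d\pi$ form a rotation of $(\sigma_1,\sigma_3)$, with no $\sigma_2$ appearing. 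Hence for an orbit-tangent vector $v=v^1X_1+v^2X_2+v^3X_3$,
\[
|d\pi(v)|^2\ =\ (v^1)^2+(v^3)^2\ \le\ \tfrac1{q^2}\bigl(a^2(v^1)^2+c^2(v^3)^2\bigr)\ \le\ \tfrac1{q^2}\,|v|_g^2 ,
\]
using $|v|_g^2=a^2(v^1)^2+b^2(v^2)^2+c^2(v^3)^2$ and $a,c\ge q$; combined with $d\pi(\partial_r)=0$ this yields $|d\pi(w)|\le\tfrac1q|w|_g$ for all $w\in T_pM$, $p\in\{r\le R\}$. It follows that $|\pi(\gamma(s))-\pi(\gamma(0))|\le\tfrac1q\,\mathrm{Length}(\gamma)\le L/q$, so $\gamma$ is contained in $\{r\le R\}\cap\pi^{-1}\bigl(\overline{B(\pi(\gamma(0)),L/q)}\bigr)$, a compact set; with Hopf–Rinow this completes Theorem~\ref{E2thm}.

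The point I expect to need the most care is the bookkeeping in (b): aligning the invariant coframe that diagonalizes $g_r$ with the product coordinates, and especially verifying $d\pi(\partial_r)=0$, i.e.\ that $\pi$ is constant along the normal geodesics. This is clean from symmetry: the Lie-algebra automorphism $X_1\mapsto-X_1$, $X_2\mapsto X_2$, $X_3\mapsto-X_3$ preserves the bracket relations~\Ref{X123} and the metric, hence descends to an isometry $(x,y)\mapsto(-x,y)$ of $g$; composing with translations, each fiber $\pi^{-1}(\mathrm{pt})$ is a totally geodesic surface, and, as it contains the normal geodesic through the corresponding bolt point, $d\pi(\partial_r)\equiv0$. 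Granting this and $a,c\ge q$, the Cauchy–Schwarz estimate above and the rest are routine.
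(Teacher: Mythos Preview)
Your argument is correct and follows essentially the same strategy as the paper: first bound $r$ (equivalently $t$) along $\gamma$ using $g\ge dr^2$ together with $\int_{-\infty}^{\eta}abc\,dt=\infty$, then bound the translational coordinates using the key inequality $a,c\ge q$ along the unstable curve. The paper works directly in the coordinates $(t,x,y,\theta)$ on the principal part, with the explicit left-invariant coframe $\sigma_1=\cos\theta\,dx+\sin\theta\,dy$, $\sigma_3=-\sin\theta\,dx+\cos\theta\,dy$; it then bounds $L(\gamma)\ge q\int|\sigma_1(\gamma')|$ and $L(\gamma)\ge q\int|\sigma_3(\gamma')|$, and extracts a bound on $\int x'$ and $\int y'$ by multiplying by $|\cos\theta|$, $|\sin\theta|$ and summing. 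Your packaging of this step as the single observation that $\pi^{*}(dx^2+dy^2)=\sigma_1^2+\sigma_3^2\le q^{-2}g$, i.e.\ that $\pi$ is $q^{-1}$-Lipschitz, is cleaner and yields the slightly sharper constant.

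One remark on the part you flagged as delicate: the totally-geodesic argument for $d\pi(\partial_r)=0$ is correct (your automorphism $X_1\mapsto-X_1$, $X_3\mapsto-X_3$ does preserve the brackets with $p_1=p_3=1$, $p_2=0$, and on $\mathbb{R}^2\times\mathbb{R}^2$ it acts by negating the first factor, so its fixed set $\pi^{-1}(0)$ is totally geodesic), but it is more than you need. In the paper's coordinates one simply has $\partial_r=(abc)^{-1}\partial_t$ and $\pi(t,x,y,\theta)=(x,y)$, so $d\pi(\partial_r)=0$ is immediate; the identity $dx^2+dy^2=\sigma_1^2+\sigma_3^2$ is then a one-line inversion of the coframe formulas. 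Either route closes the argument.
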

\begin{proof}
For the Euclidean group
\[E(2)=\left\{ \left( \begin{array}{ccc} \cos\theta & -\sin\theta & x \\ \sin\theta & \cos\theta & y \\ 0 & 0 & 1 \end{array}\right)\ \Bigg|\ x,y,\theta\in\mathbb{R}\right\}, \]
The left-invariant frame
\[ X_1=\cos\theta\partial_x+\sin\theta\partial_y,\, X_2=\partial_\theta, \, X_3=-\sin\theta\partial_x+\cos\theta\partial_y, \]
has dual co-frame
\[\sigma_1=\cos\theta\,dx+\sin\theta\,dy,\, \sigma_2=d\theta, \, \sigma_3=-\sin\theta\,dx+\cos\theta\,dy.\]
We note that in the coordinate system $(t,x,y,\theta)$ on $M=E(2)\times_{SO(2)}\mathbb{R}^2$,
with $t\in(-\infty,\eta)$, a curve will leave every compact set only if along it, either $x$ or $y$
approach $\pm\infty$ or $t$ approaches $\eta$ (Proposition~\ref{prop:bolt} is used to make this
statement). Now
a curve of finite length $\gamma:I\to M$ has length
\[ L(\gamma) = \int_I |\gamma'(u)|\,du\geq \int_I |g(\gamma'(u),v)|\,du\]
for any unit vector field $v$.
Employing the metric in the form \Ref{bianchiAmet}, and choosing unit vector fields in the
directions of the frame fields $\{\partial_t, X_1, X_3\}$ of \Ref{X123}, we have
\begin{align}
\label{length1} L(\gamma) &\ge \int_I abc\left|t'(u)\right|du\ge\left|\int_{t(I)} abc\,dt \right|, \\
\label{length2} L(\gamma) &\ge \int_I a\left|\cos(\theta(u))x'(u)+\sin(\theta(u))y'(u)\right|du \\
\nonumber &\ge q\int_I \left|\cos(\theta(u))x'(u)+\sin(\theta(u))y'(u)\right|du, \\
\label{length4} L(\gamma) &\ge \int_I c\left|-\sin(\theta(u))x'(u)+\cos(\theta(u))y'(u)\right|du\\
\nonumber &\ge q\int_I \left|-\sin(\theta(u))x'(u)+\cos(\theta(u))y'(u)\right|du.
\end{align}
Now equations \Ref{eta_dist} and \Ref{length1} imply that along $\gamma$, $t$ is bounded away from $\eta$.
Then, as $|\cos(\theta(u))|\le 1$ and $|\sin(\theta(u))|\le 1$, equations \Ref{length2} and \Ref{length4} give
\begin{align*} L(\gamma)&\ge q\int_I \left|\cos(\theta(u))x'(u)+\sin(\theta(u))y'(u)\right||\cos(\theta(u))|du,\\
L(\gamma)&\ge q\left|\int_I [\cos^2(\theta(u))x'(u)+\cos(\theta(u))\sin(\theta(u))y'(u)]du\right|, \\
L(\gamma)&\ge q\int_I [\cos^2(\theta(u))x'(u)+\cos(\theta(u))\sin(\theta(u))y'(u)]du\ge-L(\gamma). \\
\end{align*}
Similarly,
\[L(\gamma)\ge q\int_I [\sin^2(\theta(u))x'(u)-\cos(\theta(u))\sin(\theta(u))y'(u)]du\ge-L(\gamma). \]
Summing these we get
\[ q\left|\int_I x'(u)du\right|\le 2L(\gamma), \]
showing that $x$ is bounded along $\gamma$.  A similar calculation shows that $y$ is bounded.
Therefore $g$ is a complete metric on $M$.
\end{proof}

\section{Incompleteness in the Ricci flat case with $N=0$}\lb{generalized}

As mentioned in the comments of section \ref{OdE}, there exists a ``gauge" freedom of
rotating $\xx$, $\yy$ in their span, which allows one to simplify the systems in Theorem~\ref{ODE-thm}. We now employ this for case (2) of that theorem. It can easily be
shown that in the case of rotation by angle $\theta$ in $\HH$, $S$ transforms to $S+2\theta$.
In this section and the next one we make the pointwise choice of the function $\theta$ that
results in $S=\pi/4$. With that choice we have the following rephrasing, {\em but also strengthening}, of part $(2)$ of Theorem~\ref{ODE-thm}.

\begin{thm}\lb{PDE-thm}
Let $(M,g)$ be a Riemannian 4-manifold admitting an orthonormal frame
$\{\kk,\tT,\xx,\yy\}$ as in section 3, satisfying in particular
relations \Ref{brack1}-\Ref{rels2} with $N=0$. Let $V$ be the domain
of $\ta$ of \Ref{grad}. Then in the set $V\cap\{F+G\ne 0\}$, equations
\Ref{KE-eqns} hold with $\lam=0$ if and only if,
after a pointwise rotation of $\xx$, $\yy$ by an appropriate angle,
$P$, $Q$, $R$, $S$, $L$ of \Ref{chan-var} satisfy the system
\begin{align}
i&)\ S=\pi/4,\nonumber\\
ii&)\ d_{\kk-\tT}R=R(P+2L),  &&iii)\ d_{\kk+\tT}R=-RQ,\nonumber\\
iv&)\ d_{\kk-\tT}L=2L^2+R^2/2, &&\ v)\ d_{\kk-\tT}P-d_{\kk+\tT}Q=2LP+2R^2,\lb{sys2}
\end{align}
and all $d_\xx$, $d_\yy$ derivatives of these functions vanish.
\end{thm}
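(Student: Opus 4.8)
The plan is to prove the two implications of this equivalence separately. The recurring fact I would invoke is that the K\"ahler--Einstein equation $\rho=\lam\om$ is intrinsic to $(M,g,J)$ and that rotating $\xx,\yy$ inside $\HH=\mathrm{span}(\xx,\yy)$ alters neither $g$, $J$, nor $\om$; hence \Ref{KE-eqns} (for a fixed $\lam$) holds for one frame of the type in Section~\ref{construct} exactly when it holds for every frame obtained from it by such a rotation.

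\textbf{``Only if''.} Assume \Ref{KE-eqns} with $\lam=0$; recall $N=0$. Theorem~\ref{ODE-thm}(2) supplies \Ref{sys2a} for $P,Q,R,S,L$ together with the vanishing of all their $d_\xx,d_\yy$ derivatives. I would put $\theta=\tfrac12(\pi/4-S)$: this is smooth on $V\cap\{F+G\ne0\}$, and $d_\xx\theta=d_\yy\theta=0$ because $d_\xx S=d_\yy S=0$. Rotating $\xx,\yy$ by $\theta$ produces a frame $\{\kk,\tT,\xx',\yy'\}$ with corresponding functions $A',\dots,H',L',N'$ and $P',Q',R',S'$. I would first check that this frame is again of the type in Section~\ref{construct}: $[\kk,\tT]$ is unchanged, and $[\xx',\yy']=[\xx,\yy]$ because $d_\xx\theta=d_\yy\theta=0$, so $L'=L$ and $N'=N=0$; the remaining four brackets retain the form \Ref{brack2}--\Ref{brack3}, and a short computation (using $N=0$, i.e.\ $D=-A$, $H=-E$, $2A=F+G$, $B+C=-2E$) shows $A',\dots,H'$ again satisfy \Ref{rels1}--\Ref{rels2}. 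By intrinsicality the new frame also satisfies \Ref{KE-eqns} with $\lam=0$; moreover $R'=R$, and by the transformation law $S'=S+2\theta=\pi/4$, whence $R^2=2(F'+G')^2$ and $\{F'+G'\ne0\}=\{R\ne0\}\supseteq V\cap\{F+G\ne0\}$. Thus Theorem~\ref{ODE-thm}(2), now applied to the primed frame, returns \Ref{sys2a} for $P',Q',R',S',L'$ on that set; as $S'\equiv\pi/4$ one has $d_{\kk\pm\tT}S'=0$, so items (i), (ii) of \Ref{sys2a} become items (ii), (iii) of \Ref{sys2}, items (iii), (iv) of \Ref{sys2a} are items (iv), (v) of \Ref{sys2}, and the $d_\xx,d_\yy$-vanishing is inherited. (Alternatively one can bypass the re-invocation: $R$ and $L$ are rotation-invariant and $S,P,Q$ transform so that each equation of \Ref{sys2a} is invariant, \Ref{sys2} being just its $S=\pi/4$ form.)

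\textbf{``If''.} Conversely, suppose that after a pointwise rotation of $\xx,\yy$ by an angle $\theta$ with $d_\xx\theta=d_\yy\theta=0$ — so the rotated frame is again of the type in Section~\ref{construct} with vanishing $N$ — the resulting $P,Q,R,S,L$ satisfy \Ref{sys2} and have vanishing $d_\xx,d_\yy$ derivatives. By intrinsicality it is enough to verify \Ref{KE-eqns} with $\lam=0$ for this frame. Feeding $N=0$ into \Ref{rels1}--\Ref{rels2} and invoking \Ref{chan-var} gives
\[
C-H=-\tfrac14(P+Q),\qquad A-F=-\tfrac14(P-Q),\qquad 2L+C-H+A-F=2L-\tfrac12P.
\]
Hence the first equation of \Ref{KE-eqns} reads $-N(2L-\tfrac12P)=0=\lam$, valid since $N=0$; the last four read ``$d_\xx$ and $d_\yy$ of $L-\tfrac14(P\pm Q)$ vanish,'' true by hypothesis; and the $\rho(\kk,\tT)$ equation becomes
\[
d_{\kk-\tT}L-\tfrac14\bigl(d_{\kk-\tT}P-d_{\kk+\tT}Q\bigr)-L\bigl(2L-\tfrac12P\bigr)=0,
\]
which reduces to $0=0$ after substituting item (v) and then item (iv) of \Ref{sys2}. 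Thus \Ref{KE-eqns} holds with $\lam=0$, and the equivalence is established.

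\textbf{Main obstacle.} The step I expect to demand the most care is the structure-preservation check in the ``only if'' direction: a \emph{position-dependent} rotation of $\xx,\yy$ perturbs $[\kk,\xx],[\kk,\yy],[\tT,\xx],[\tT,\yy]$ through $d_\kk\theta,d_\tT\theta$ terms, and one must confirm that the new frame still obeys \Ref{brack1}--\Ref{rels2} with $N=0$ and correctly track that $R,L$ are unchanged while $S\mapsto S+2\theta$ (with $P,Q$ picking up $d_{\kk\pm\tT}\theta$ terms). By contrast the ``if'' direction is purely algebraic; note in particular that it uses only items (iv) and (v) of \Ref{sys2} (plus the $d_\xx,d_\yy$-vanishing), items (i)--(iii) being additional consequences that the forward direction produces but that the converse does not need.
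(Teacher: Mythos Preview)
Your proof is correct and follows essentially the same route as the paper's. The paper's argument (in the appendix, \S A.3) derives \Ref{sys2a} directly from the structural identities \Ref{ll}--\Ref{long3} together with \Ref{lam2}, then observes that the rotation making $S=\pi/4$ turns \Ref{sys2a} into \Ref{sys2}; for the converse it simply says the steps are reversible. You instead invoke Theorem~\ref{ODE-thm}(2) as a black box for the forward direction and carry out the reversibility check explicitly for the converse. These are the same argument, packaged differently.

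Two small remarks. First, your care with the structure-preservation of the rotated frame is warranted and more thorough than the paper, which just asserts the rotation. Your key observation that $d_\xx\theta=d_\yy\theta=0$ (since $\theta$ depends only on $S$ and $d_\xx S=d_\yy S=0$) is exactly what makes $[\xx',\yy']=[\xx,\yy]$ and hence keeps \Ref{brack1} intact; \Ref{rels1}--\Ref{rels2} then follow because they encode the K\"ahler property of $(g,J)$, which is intrinsic. Second, your observation that the converse uses only items (iv) and (v) of \Ref{sys2} is correct; in fact one can push this further: items (ii), (iii), (v) are rewritings of the structural equations \Ref{long1}--\Ref{long3} (i.e.\ $d^2=0$) and thus hold for \emph{any} frame of the type in Section~\ref{construct} with $N=0$ and $S=\pi/4$, so the genuine content of the converse is item (iv) alone, together with the $d_\xx,d_\yy$-vanishing.
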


The proof of this theorem is contained in the appendix. We will also need the
following lemma.
\begin{lemma}\lb{scnd-order}
Given vector fields $\kk-\tT$, $\kk+\tT$, and smooth functions $R$, $L$ on a given manifold,
equations \Ref{sys2}ii)-v) hold for some smooth functions $P$, $Q$, and away from the zeros of $R$,
if and only if \Ref{sys2}iv)
and
\begin{equation}
\lb{r2ord} 3R^2 = d_{\kv-\tv}^2 \log |R| + d_{\kv+\tv}^2\log |R| - 2Ld_{\kv-\tv}\log |R|
\end{equation}
hold there.
\end{lemma}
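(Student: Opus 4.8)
The plan is to eliminate $P$ and $Q$ from \Ref{sys2}ii)-v). Work in the region where $R\ne 0$; there $\log|R|$ is a smooth function, with $d_{\kk-\tT}\log|R|=(d_{\kk-\tT}R)/R$ and likewise for $d_{\kk+\tT}$. Dividing \Ref{sys2}ii) by $R$ shows it is equivalent to $P=d_{\kk-\tT}\log|R|-2L$, and \Ref{sys2}iii) is equivalent to $Q=-d_{\kk+\tT}\log|R|$. In particular, ii) and iii) can hold for \emph{some} smooth $P$, $Q$ if and only if they hold for \emph{these} functions, which are automatically smooth since $R\ne 0$ and are thereby uniquely determined.

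Next I would substitute these expressions into \Ref{sys2}v). Differentiating gives $d_{\kk-\tT}P=d_{\kk-\tT}^2\log|R|-2\,d_{\kk-\tT}L$ and $d_{\kk+\tT}Q=-d_{\kk+\tT}^2\log|R|$, while $2LP=2L\,d_{\kk-\tT}\log|R|-4L^2$. Hence, with $P$, $Q$ as above, \Ref{sys2}v) is equivalent to
\[
d_{\kk-\tT}^2\log|R|+d_{\kk+\tT}^2\log|R|-2\,d_{\kk-\tT}L=2L\,d_{\kk-\tT}\log|R|-4L^2+2R^2.
\]
Now invoke \Ref{sys2}iv) in the form $-2\,d_{\kk-\tT}L=-4L^2-R^2$; substituting and cancelling the $-4L^2$ terms turns the last display into exactly \Ref{r2ord}. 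Thus, \emph{given} iv), equation v) (for the above $P$, $Q$) holds if and only if \Ref{r2ord} holds.

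Assembling the pieces: if \Ref{sys2}ii)-v) hold for some $P$, $Q$, then those $P$, $Q$ must be the functions exhibited above, iv) holds by hypothesis, and the computation yields \Ref{r2ord}; conversely, if iv) and \Ref{r2ord} hold, \emph{define} $P$ and $Q$ by the same formulas (smooth since $R\ne 0$), so that ii) and iii) hold by construction, iv) holds by hypothesis, and reversing the computation recovers v). The argument is purely computational, so I do not expect a real obstacle; the only points deserving a word of care are that $R\ne 0$ is used both to form $\log|R|$ and to divide \Ref{sys2}ii)-iii) by $R$, and that although $d_{\kk-\tT}$ and $d_{\kk+\tT}$ need not commute, no commutation is ever invoked — each appears only through the pure iterated derivatives $d_{\kk-\tT}^2$ and $d_{\kk+\tT}^2$.
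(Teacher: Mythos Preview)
Your proposal is correct and follows essentially the same approach as the paper: solve \Ref{sys2}ii)--iii) for $P$ and $Q$ in terms of $\log|R|$ and $L$, substitute into \Ref{sys2}v), and use \Ref{sys2}iv) to reduce to \Ref{r2ord}, with the converse obtained by defining $P$, $Q$ via the same formulas. You have simply written out in full the computation the paper summarizes as ``a direct consequence'' and ``one easily verifies.''
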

\begin{proof}
Away from the zeros of $R$, equation \Ref{r2ord} is a direct consequence of \Ref{sys2}ii)-v).
Conversely, given \Ref{sys2}iv), define $P$ and $Q$ so that \Ref{sys2}ii),iii) both hold
whenever $R$ is nonzero. Then one easily verifies that \Ref{sys2}v) follows from
\Ref{r2ord} and \Ref{sys2}iv).
\end{proof}


\subsection{Laplace Operator}

\begin{prop}\lb{r-laplace}
For $(M,g)$ as in Theorem~\ref{PDE-thm} with functions $P,Q,R,S,L$ satisfying \Ref{sys2}, $R$ will satisfy away from its zeros
\[ \Delta \log |R| = \frac{3}{2}R^2, \]
where $\Delta$ is the Laplacian of $g$.
\end{prop}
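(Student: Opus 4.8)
The plan is to write the Laplacian $\Delta=\operatorname{div}\operatorname{grad}$ of $g$ in the orthonormal frame $\{\kk,\tT,\xx,\yy\}$, apply it to $u:=\log|R|$, and observe that, away from the zeros of $R$, it collapses into exactly the combination of directional derivatives on the right-hand side of \Ref{r2ord}. Lemma~\ref{scnd-order} then finishes the argument, since the hypotheses of Theorem~\ref{PDE-thm} include \Ref{sys2}ii)-v), so \Ref{r2ord} holds wherever $R\ne 0$.

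Concretely, I would start from the frame identity $\Delta f=\sum_i\bigl(e_i(e_i f)-(\n_{e_i}e_i)f\bigr)$. For the second-order part, writing $\kk=\tfrac12(d_{\kk+\tT}+d_{\kk-\tT})$ and $\tT=\tfrac12(d_{\kk+\tT}-d_{\kk-\tT})$ gives $\kk(\kk u)+\tT(\tT u)=\tfrac12\bigl(d_{\kk-\tT}^2u+d_{\kk+\tT}^2u\bigr)$, while $\xx(\xx u)+\yy(\yy u)=0$ because $d_\xx R=d_\yy R=0$ by Theorem~\ref{PDE-thm}, so $d_\xx u=d_\yy u=0$ identically on $\{R\ne0\}$. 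The one point requiring attention is the ``mean curvature'' term $H:=\sum_i\n_{e_i}e_i$, since \Ref{conn} records only $\n_\kk\kk$, $\n_\xx\xx$, $\n_\xx\kk$. I would recover the rest from $\n J=0$, torsion-freeness, and the bracket relations \Ref{brack1}-\Ref{rels2}: from $\n_\kk\kk=-L\tT$ one gets $\n_\kk\tT=L\kk$, then $\n_\tT\kk=-L\tT$ using $[\kk,\tT]=L(\kk+\tT)$, then $\n_\tT\tT=J\n_\tT\kk=L\kk$, so $\n_\kk\kk+\n_\tT\tT=L(\kk-\tT)$; and from $\n_\xx\xx=A\kk+E\tT$ one gets $\n_\xx\yy=J\n_\xx\xx=-E\kk+A\tT=\n_\yy\xx$ (using $[\xx,\yy]=N(\kk+\tT)=0$), hence $\n_\yy\yy=J\n_\yy\xx=-A\kk-E\tT$ and $\n_\xx\xx+\n_\yy\yy=0$. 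Thus $H=L(\kk-\tT)$ — the cancellation in $\HH$ is precisely where $N=0$ is used — so $Hu=L\,d_{\kk-\tT}u$ by \Ref{grad}.

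Assembling these pieces yields
\[
\Delta\log|R|=\tfrac12\bigl(d_{\kk-\tT}^2\log|R|+d_{\kk+\tT}^2\log|R|-2L\,d_{\kk-\tT}\log|R|\bigr),
\]
and by \Ref{r2ord} the parenthesized expression equals $3R^2$, so $\Delta\log|R|=\tfrac32 R^2$, as claimed. I anticipate no real obstacle here: the analytic substance lies entirely in the already-proved Lemma~\ref{scnd-order}, and the remaining work is the bookkeeping of the unlisted Levi-Civita coefficients together with the verification that the $\xx$- and $\yy$-directions drop out of both $\sum_i e_i(e_i u)$ and $H$.
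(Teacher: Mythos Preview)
Your proof is correct and follows essentially the same route as the paper: compute the Laplacian of a function annihilated by $d_\xx$ and $d_\yy$ in the orthonormal frame, obtain $\Delta u=\tfrac12\bigl(d_{\kk-\tT}^2u+d_{\kk+\tT}^2u\bigr)-L\,d_{\kk-\tT}u$, and then invoke Lemma~\ref{scnd-order}. The only cosmetic difference is that the paper reaches this frame formula by computing $\operatorname{div}(\nabla u)$ via $\mathcal L_{\nabla u}\,d\mathrm{vol}$ and the coframe differentials \Ref{d-frame}, whereas you use the identity $\Delta f=\sum_i\bigl(e_i(e_if)-(\n_{e_i}e_i)f\bigr)$ together with the explicit Levi-Civita relations; both yield $\sum_i\n_{e_i}e_i=(L+N)(\kk-\tT)$, which reduces to $L(\kk-\tT)$ under $N=0$.
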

\begin{proof}
For a function $u$ satisfying $d_\xv u=d_\yv u=0$,
\[\nabla u = (d_\kv u)\kv + (d_\tv u)\tv \]
Then using $\mathcal{L}_\mathbf{v}d\mathrm{vol}=(\mathrm{div}\,\mathbf{v})d\mathrm{vol}$
and \Ref{d-frame} in the appendix, we get
\begin{align*} \mathcal{L}_{\nabla u} d\mathrm{vol} &= d(\iota_{\nabla u}d\mathrm{vol}) \\
&=d((d_\kv u) \tf\wedge\xf\wedge\yf - (d_\tv u)\kf\wedge\xf\wedge\yf) \\
&=[d_\kv^2 u+(d_\kv u)(-L-A-D)+d_\tv^2 u-(d_\tv u)(-L+E+H)]d\mathrm{vol} \\
&=[d_\kv^2 u + d_\tv^2 u - (d_{\kv-\tv}u)(L+N)]d\mathrm{vol}
\end{align*}
showing
\[ \Delta u = d_\kv^2 u + d_\tv^2 u - (d_{\kv-\tv}u)(L+N) \]

But $N=0$, and Lemma~\ref{scnd-order} implies that $R$ satisfies away from its zeros the second order equation \Ref{r2ord}.
\end{proof}

\subsection{2d Leaf Metrics}\lb{leaf2d}
Since $\VV=\mathrm{span}\{\kv,\tv\}$ is integrable, there is a foliation of $M$ with leaves tangent to $\VV$.
It is easily checked using \Ref{conn} that the leaves are totally geodesic.
Abusing notation, let $\kf$ and $\tf$ now be the forms pulled back to a leaf in this foliation.
Then the metric induced on the leaf is just $\bar g = \kf^2+\tf^2$. To compute its Gauss curvature,
note that
\[ d\left(\begin{array}{c} \kf \\ \tf \end{array}\right)=-\left(\begin{array}{c} L\kf\wedge\tf \\ L\kf\wedge\tf \end{array}\right) =
 -\left(\begin{array}{cc} 0 & L(\kf+\tf) \\ -L(\kf+\tf) & 0 \end{array}\right)\wedge\left(\begin{array}{c} \kf \\ \tf \end{array}\right) \]
while \Ref{d-frame} in the appendix implies
\[ d(L(\kf+\tf)) = dL\wedge(\kf+\tf)-2L^2\kf\wedge\tf = (d_{\kv-\tv}L-2L^2)\kf\wedge\tf, \]
showing that by \Ref{sys2}iv) the Gauss curvature of $\bar g$ is
\be\lb{GaussC}
K_{\bar{g}} = \frac{R^2}{2}.
\end{equation}
This also gives the following corollary.
\begin{cor}
A vertical leaf $(\Sigma,\bar{g})$ has Gauss curvature $K_{\bar{g}}\ge0$ satisfying
\begin{equation}\label{gauss_eq} \bar{\Delta} \log K_{\bar{g}} = 6K_{\bar{g}} \end{equation}
away from its zeros, where $\bar{\Delta}$ is the Laplacian of $\bar{g}$.
\end{cor}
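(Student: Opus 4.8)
The plan is to assemble this corollary directly from facts already established in the excerpt. First I would invoke Proposition~\ref{r-laplace}, which gives $\Delta \log|R| = \tfrac32 R^2$ on $M$ away from the zeros of $R$, and relation \Ref{GaussC}, namely $K_{\bar g} = R^2/2$, which exhibits $K_{\bar g}\ge 0$ immediately and shows its zero set coincides with that of $R$ restricted to the leaf. The content of the corollary is then a translation of the ambient identity into an intrinsic one on a single leaf $(\Sigma,\bar g)$.

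The key observation making this translation work is that $\VV=\mathrm{span}\{\kv,\tv\}$ is totally geodesic (noted in Section~\ref{leaf2d} via \Ref{conn}), and that $R$, $L$, and hence $\log|R|$ have vanishing $d_\xv$, $d_\yv$ derivatives by Theorem~\ref{PDE-thm}. Thus for a function $u$ constant along $\HH$, the ambient gradient $\nabla u = (d_\kv u)\kv + (d_\tv u)\tv$ is tangent to the leaves, so its restriction to $\Sigma$ is exactly the intrinsic gradient $\bar\nabla u$; and because the leaves are totally geodesic, the intrinsic divergence of a tangential field agrees with the tangential part of the ambient divergence, so the ambient Laplacian formula $\Delta u = d_\kv^2 u + d_\tv^2 u - (d_{\kv-\tv}u)(L+N)$ derived in Proposition~\ref{r-laplace} restricts to the intrinsic Laplacian $\bar\Delta u$ on the leaf (with $N=0$). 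In particular $\bar\Delta \log|R| = \Delta\log|R| = \tfrac32 R^2$ on $\Sigma$ away from the zeros.

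It then remains only to substitute $R^2 = 2K_{\bar g}$ from \Ref{GaussC}. Writing $\log|R| = \tfrac12\log R^2 = \tfrac12\log(2K_{\bar g}) = \tfrac12\log K_{\bar g} + \text{const}$, the constant is killed by $\bar\Delta$, giving $\bar\Delta\log K_{\bar g} = 2\bar\Delta\log|R| = 2\cdot\tfrac32 R^2 = 3R^2 = 6K_{\bar g}$, which is \Ref{gauss_eq}. I do not anticipate a genuine obstacle here; the only point requiring a sentence of care is the identification of the restricted ambient Laplacian with the intrinsic leaf Laplacian, which rests on total geodesy of the leaves and the vanishing of the $\HH$-derivatives — both already recorded — so the argument is essentially a matter of citing Proposition~\ref{r-laplace}, \Ref{GaussC}, and Section~\ref{leaf2d} in the right order.
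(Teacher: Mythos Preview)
Your approach is essentially the paper's: combine \Ref{GaussC} with the second-order equation \Ref{r2ord} (equivalently, Proposition~\ref{r-laplace}) and the identification of the relevant differential operator with the leaf Laplacian. The paper's one-line proof simply records the intrinsic formula $\bar\Delta u = d_\kv^2 u + d_\tv^2 u - (d_{\kv-\tv}u)L$ and lets \Ref{r2ord} and \Ref{GaussC} do the rest; you instead pass through the ambient $\Delta$ and restrict.

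One point of care: the equality $\Delta u|_\Sigma=\bar\Delta u$ for $u$ with $d_\xv u=d_\yv u=0$ is \emph{not} a consequence of the $\VV$-leaves being totally geodesic. For a tangential field $X$, the difference $\mathrm{div}_M X-\mathrm{div}_\Sigma X$ equals $-g(X,\nabla_\xv\xv+\nabla_\yv\yv)$, which is governed by the mean curvature of the \emph{orthogonal} distribution $\HH$, not by the second fundamental form of $\VV$. Here $\nabla_\xv\xv+\nabla_\yv\yv=N(\kv-\tv)$ (use \Ref{conn}, $J$-parallelism, and \Ref{rels2}), so the difference is $-N\,d_{\kv-\tv}u$, which vanishes precisely because $N=0$---exactly the ``$(L+N)\to L$'' you note parenthetically. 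So your conclusion is correct; just replace the total-geodesy sentence with either this mean-curvature computation or, as the paper does, a direct verification of the $2$-dimensional formula $\bar\Delta u=d_\kv^2 u+d_\tv^2 u-L\,d_{\kv-\tv}u$ from $d\kf=d\tf=-L\,\kf\wedge\tf$ on the leaf.
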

\begin{proof}
This holds because the leaf Laplacian is
\be\lb{Lap-leaf} \bar\Delta u = d_\kv^2 u + d_\tv^2 u - (d_{\kv-\tv}u)L. \end{equation}
\end{proof}
\subsubsection{Prescribed Gauss Curvature}
The conformal metric
\[e^{2u}\bar g = e^{2u}(\kf^2+\tf^2)\]
has Gauss curvature,
\begin{equation} \label{pre_gauss} K_{e^{2u}\bar g} = e^{-2u}(K_{\bar{g}}-\bar{\Delta} u). \end{equation}
\begin{cor}\lb{gau-conf} Whenever defined, the conformal metric $K_{\bar{g}}\bar{g}$ has constant curvature -2.
\end{cor}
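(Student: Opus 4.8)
The plan is to apply the prescribed Gauss curvature formula \Ref{pre_gauss} with the specific conformal factor $e^{2u} = K_{\bar g}$, i.e.\ $u = \tfrac12\log K_{\bar g}$, using the fact from \Ref{GaussC} that $K_{\bar g} = R^2/2 > 0$ wherever it is defined and nonzero (so that $\log K_{\bar g}$ makes sense and the conformal metric $K_{\bar g}\bar g$ is legitimate). Substituting into \Ref{pre_gauss} gives
\[
K_{K_{\bar g}\bar g} = K_{\bar g}^{-1}\bigl(K_{\bar g} - \bar\Delta u\bigr) = 1 - K_{\bar g}^{-1}\,\bar\Delta\!\left(\tfrac12\log K_{\bar g}\right) = 1 - \tfrac12 K_{\bar g}^{-1}\,\bar\Delta\log K_{\bar g}.
\]
First I would invoke the corollary immediately preceding this statement, namely equation \Ref{gauss_eq}, which says $\bar\Delta\log K_{\bar g} = 6 K_{\bar g}$ away from the zeros of $K_{\bar g}$. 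Plugging this in yields $K_{K_{\bar g}\bar g} = 1 - \tfrac12 K_{\bar g}^{-1}(6K_{\bar g}) = 1 - 3 = -2$, which is exactly the claim.

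The only subtlety worth flagging is the domain of validity: both \Ref{pre_gauss} and \Ref{gauss_eq} are stated "whenever defined" / "away from the zeros," and $K_{\bar g} = R^2/2$ vanishes precisely at the zeros of $R$. So the identity $K_{K_{\bar g}\bar g} = -2$ holds on the open set where $R \ne 0$; there the conformal factor is smooth and positive, the conformal metric is a genuine Riemannian metric, and the curvature computation is valid. This is consistent with the phrasing "whenever defined" in the statement, so no further argument is needed.

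I do not anticipate any real obstacle here — the statement is a one-line consequence of combining the conformal curvature transformation law \Ref{pre_gauss} with the previously established semilinear equation \Ref{gauss_eq} for $\log K_{\bar g}$. The substance of the work has already been done in deriving \Ref{GaussC} and \Ref{gauss_eq} from the system \Ref{sys2}; the present corollary is just the clean repackaging that shows the leaf metrics, after the natural conformal rescaling by their own Gauss curvature, become hyperbolic of curvature $-2$ — the key structural property that the rest of the section exploits when constructing and obstructing the Ricci-flat K\"ahler metrics.
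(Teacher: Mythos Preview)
Your proof is correct and follows exactly the paper's approach: apply the conformal curvature formula \Ref{pre_gauss} with $u=\tfrac12\log K_{\bar g}$ and then use \Ref{gauss_eq} to evaluate $\bar\Delta\log K_{\bar g}=6K_{\bar g}$, giving $1-3=-2$. The paper's proof is the same one-line computation, stated more tersely.
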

\begin{proof}
This follows immediately from  the prescribed Gauss curvature equation \Ref{pre_gauss} with $u=\frac{1}{2}\log K_{\bar{g}}$.
\end{proof}

\begin{prop}
Suppose that $(\Sigma,\bar{g})$ is a surface of non-negative Gauss curvature satisfying \Ref{gauss_eq}, then
$\Sigma$ is not a complete non-flat finitely connected surface.
\end{prop}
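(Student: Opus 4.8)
The plan is to argue by contradiction: suppose $(\Sigma,\bar g)$ is complete, non-flat, and finitely connected, with $K_{\bar g}\ge 0$ satisfying \Ref{gauss_eq}. The strategy is to feed these hypotheses into a Liouville-type theorem for the conformally changed metric, using the fact, recorded in Corollary~\ref{gau-conf}, that $K_{\bar g}\bar g$ has constant curvature $-2$ wherever it is defined. First I would observe that since $\bar g$ is non-flat and $K_{\bar g}\ge 0$ satisfies the elliptic equation \Ref{gauss_eq} (which is $\bar\Delta\log K_{\bar g}=6K_{\bar g}$ away from the zero set of $K_{\bar g}$), the strong maximum principle / unique continuation forces $K_{\bar g}>0$ everywhere on $\Sigma$: a non-negative solution of such a semilinear equation that vanishes somewhere, together with non-flatness, cannot occur. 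Hence $\tilde g := K_{\bar g}\,\bar g$ is a genuine smooth metric on all of $\Sigma$ of constant curvature $-2$.

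Next I would bring in completeness. The key point is that $\tilde g = K_{\bar g}\,\bar g$ with $K_{\bar g}\ge$ (a positive constant on compact sets, but more to the point) — one needs that $\tilde g$ is complete, or handle the case where it is not. Here finite connectedness enters: a finitely connected surface with a complete metric of finite... actually the cleaner route is Yau's theorem, alluded to in the introduction (``Aided by a theorem due to Yau''). I would invoke the Schwarz–Yau/Ahlfors lemma or Yau's result on the nonexistence of certain conformal metrics: on a complete surface $(\Sigma,\bar g)$ with $K_{\bar g}\ge 0$, any conformal metric cannot have curvature bounded above by a negative constant unless something degenerates. More precisely, I would show that completeness of $\bar g$ together with $K_{\bar g}>0$ bounded below on compacta, and the constant curvature $-2$ of $\tilde g$, forces $\tilde g$ to also be complete (since $K_{\bar g}$ can only fail to be bounded below at infinity, and one controls this via \Ref{gauss_eq}); then $(\Sigma,\tilde g)$ is a complete finitely connected hyperbolic surface, so it is conformally a finite-type Riemann surface. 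On such a surface, by Yau's theorem (or the uniformization picture), the only complete conformal metric with $K\ge 0$ that also admits a conformal rescaling to constant negative curvature must be flat — contradicting non-flatness.

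The main obstacle I expect is the completeness bookkeeping for $\tilde g$ versus $\bar g$: a priori $K_{\bar g}$ could decay at infinity, so $\tilde g = K_{\bar g}\bar g$ could be incomplete even if $\bar g$ is complete, which would block a direct application of Yau's theorem to $\tilde g$. The way I would handle this is to work instead directly with $\bar g$: since $\bar g$ is complete, non-flat, finitely connected, and $K_{\bar g}\ge 0$, one has by Cohn-Vossen / the structure theory of such surfaces that $\Sigma$ is conformally $\mathbb{C}$ or $\mathbb{C}$ minus finitely many points (or a compact surface, excluded since then $K_{\bar g}\equiv 0$ by Gauss–Bonnet with $\ge 0$ curvature and non-flatness would need... actually $\int K_{\bar g}\ge 0$). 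On $\mathbb{C}$ (minus finitely many points), complete with $K_{\bar g}\ge 0$, one writes $\bar g = e^{2\varphi}|dz|^2$ and uses \Ref{gauss_eq}: $\log K_{\bar g}$ is a subharmonic-type function satisfying $\bar\Delta \log K_{\bar g}=6K_{\bar g}\ge 0$, and one derives a growth contradiction — either $K_{\bar g}$ forces the metric to have too much area (violating completeness together with Huber's theorem on finite total curvature) or $\log K_{\bar g}$ is forced to be constant, i.e. $K_{\bar g}$ constant and then $=0$ by Gauss–Bonnet on the noncompact surface, contradicting non-flatness. Organizing exactly which of Yau's results to cite, and confirming the finite-connectedness hypothesis is used precisely to invoke Huber's theorem (finite total curvature) so that the conformal type is controlled, is the delicate part; once the conformal type is pinned down, the contradiction from \Ref{gauss_eq} is a short Liouville argument.
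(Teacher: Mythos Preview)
Your proposal contains the right ingredients but never assembles them into an actual proof; compared to the paper you are overcomplicating matters considerably.

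The paper's argument is two lines. (i) Since $(\Sigma,\bar g)$ is complete, finitely connected, and has $K_{\bar g}\ge 0$, a Cohn--Vossen--type result (Cecchini \cite{c}) gives $\int_\Sigma K_{\bar g}\,dA_{\bar g}\le 2\pi$, so in particular $K_{\bar g}\in L^1$. (ii) On the other hand, Yau's Theorem~1 in \cite{y}, applied to the equation $\bar\Delta\log K_{\bar g}=6K_{\bar g}$ on the complete manifold $(\Sigma,\bar g)$, shows that $K_{\bar g}$ is \emph{not} integrable (here non-flatness rules out $K_{\bar g}\equiv 0$). That is the whole contradiction.

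Your first route, via the conformal metric $\tilde g=K_{\bar g}\bar g$ of curvature $-2$ and a Schwarz--Yau lemma, runs into exactly the obstacle you yourself name: you have no control over completeness of $\tilde g$, and indeed Corollary~\ref{gau-conf} is never used in the paper's proof. The preliminary step of forcing $K_{\bar g}>0$ everywhere via the strong maximum principle is likewise unnecessary for the argument.

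Your second route is closer in spirit but stays vague at the decisive point. You invoke Cohn--Vossen/Huber for the conformal type and then promise a ``growth contradiction'' from subharmonicity of $\log K_{\bar g}$, but you never say which statement of Yau's does the work, nor how. The key observation you are circling is that $\bar\Delta\log K_{\bar g}=6K_{\bar g}$ already forces $\int K_{\bar g}=\infty$ on a complete manifold by Yau's theorem; this is the step that actually closes the argument, and it should be stated explicitly rather than buried inside a list of alternatives. The role of finite connectedness is solely to make the Cohn--Vossen/Cecchini total-curvature bound available; once you have finite total curvature on one side and Yau's non-integrability on the other, you are done without ever touching the conformal type, Huber's theorem, or the curvature~$-2$ metric.
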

\begin{proof}
Suppose that $(\Sigma,\bar{g})$ is a finitely connected non-flat complete surface with $K_{\bar{g}}$ non-negative.
By a result of Cecchini in \cite{c} it is also integrable on $\tilde{\Sigma}$, and
\be\lb{Cech} \int_{\Sigma} K_{\bar{g}} dA_{\bar{g}} \le 2\pi. \end{equation}
But using \Ref{gauss_eq}, a result of Yau  given in \cite[Thm.~1]{y} implies $K_{\bar{g}}$ is not integrable, which contradicts \Ref{Cech}.
\end{proof}

The same argument would apply to any completion of $\Sigma$ by attaching a single point (corresponding to attaching a bolt to $M$ transverse to $\VV$).
Since the leaves are totally geodesic, if they are non-flat and finitely connected the K\"ahler metric on $M$ is not complete.


\section{Local Ricci flat metrics with $N=0$}\lb{coords}

\subsection{Existence of leaf-like metrics}\lb{leaf-ex}
In subsection \ref{leaf2d}, two properties that a leaf metric was shown to have, were equation \Ref{gauss_eq} for the Gauss curvature, and Corollary~\ref{gau-conf}. We now construct metrics with these properties, which we call leaf-like metrics.

On an open set in $\mathbb{R}^2$, let $g_0$ be a flat metric, $\ell$ a positive function
such that $\tilde{g}:=\ell g_0$ is a hyperbolic metric of constant curvature $-2$, and $h$ a harmonic function. Define
\[
\bar{g}=\ell^{-1/2}e^{-h} g_0.
\]
This metric has Gauss curvature
\[
K_{\bar{g}}=\frac{\Delta_0(\log(\ell^{-1/2}e^{-h}))}{-2\ell^{-1/2}e^{-h}}=
-\frac12 e^h\ell^{1/2}\left(\frac{-\Delta_0\log\ell}2\right)=
-\frac12 e^h\ell^{1/2}(-2\ell)=e^h\ell^{3/2}.
\]
Thus $K_{\bar{g}}>0$ and $K_{\bar{g}}\bar{g}=\tilde{g}$ is the hyperbolic metric.
On the other hand the standard formula relating the Laplacians
of $\bar{g}$ and $g_0$ gives
\[
\bar{\Delta}\log K_{\bar{g}}=\ell^{1/2}e^h\Delta_0(\log(e^h\ell^{3/2}))
=\fr 32\ell^{1/2}e^h\Delta_0(\log\ell)=\fr 32\ell^{1/2}e^h(4\ell)=6K_{\bar{g}}.
\]

Our final theorem is as follows.
\begin{thm}\lb{g-bar-g}
Given a leaf-like metric $\bar{g}=\bar{g}(l,h,g_0)$, there exist a four-dimensional
local K\"ahler Ricci-flat metric $g$ with a totally-geodesic two-dimensional foliation
whose leaves are isometric to $\bar{g}$. The dimension of the Lie algebra of
Killing fields of $g$ is at least $2$.
\end{thm}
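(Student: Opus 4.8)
The plan is to reverse-engineer the construction so that a leaf-like metric $\bar g=\bar g(\ell,h,g_0)$ becomes the induced metric on the vertical leaves of a solution to the system \Ref{sys2}. By Theorem~\ref{PDE-thm}, producing a Ricci-flat K\"ahler $g$ of the type in section~\ref{construct} amounts to finding vector fields $\kk,\tT,\xx,\yy$ and functions $P,Q,R,S,L$ satisfying \Ref{brack1}-\Ref{rels2} with $N=0$, $\lambda=0$, together with \Ref{sys2}. First I would set $R^2=2K_{\bar g}$ (motivated by \Ref{GaussC}) and $S=\pi/4$; by the computation in subsection~\ref{leaf-ex}, $K_{\bar g}=e^h\ell^{3/2}$ satisfies $\bar\Delta\log K_{\bar g}=6K_{\bar g}$, which is exactly \Ref{gauss_eq}, hence \Ref{r2ord} holds on the leaf once one identifies $d_{\kk\mp\tT}$ with the appropriate vector fields and $L$ with the leaf connection coefficient of \Ref{conn}. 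The point of Lemma~\ref{scnd-order} is precisely that \Ref{r2ord} plus \Ref{sys2}iv) lets one \emph{define} $P$ and $Q$ away from the zeros of $R$ so that \Ref{sys2}ii),iii),v) all hold; so the remaining content is to build the ambient four-manifold so that the horizontal directions $\xx,\yy$ can be introduced consistently with the bracket relations and with all $d_\xx,d_\yy$ derivatives of $P,Q,R,S,L$ vanishing.

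The construction of the ambient manifold I expect to go as follows. Take the leaf surface $(\Sigma,\bar g)$, locally $\Sigma\subset\mathbb R^2$ with coordinates giving the flat $g_0$; the frame $\{\kk,\tT\}$ on $\Sigma$ is the (non-orthonormal, but explicitly prescribed) frame whose bracket is $[\kk,\tT]=L(\kk+\tT)$ with $L$ determined by $\bar g$ as in subsection~\ref{leaf2d}. One then forms $M=\Sigma\times\Sigma'$ (or a bundle over $\Sigma$ with two-dimensional fibre), puts coordinates on the second factor so that $\xx,\yy$ are coordinate-type vector fields there, and extends $\kk,\tT$ trivially in the fibre directions and the fibre frame so that the full bracket relations \Ref{brack2},\Ref{brack3} hold for functions $A,\dots,H$ satisfying \Ref{rels1},\Ref{rels2}. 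Because $N=0$, \Ref{brack1} forces $A+D=-(E+H)=0$, i.e. $D=-A$, $H=-E$, and the remaining freedom is $B,C,F,G$ subject to $A-D=2A=F+G$, $B+C=H-E=-2E$; these are linear algebraic constraints that can be solved pointwise, and $P,Q,R,S$ recovered from \Ref{chan-var} are then \emph{functions on $\Sigma$ only}, so $d_\xx,d_\yy$ of them automatically vanish. The equations \Ref{sys2}ii)-v) become equations purely along the leaf, which we have already arranged. The key structural check is that a metric $g$ for which this is a Levi-Civita-adapted frame actually exists; but Proposition~\ref{kah} guarantees that any frame obeying \Ref{brack1}-\Ref{rels2} with the stated $J$ gives a K\"ahler structure, and the K\"ahler-Einstein equations \Ref{KE-eqns} with $\lambda=0$ reduce — by Theorem~\ref{PDE-thm} — exactly to \Ref{sys2}, which we have satisfied.

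For the Ricci-flat (as opposed to merely K\"ahler-Einstein) conclusion: $\lambda=0$ is forced here since $N=0$ makes the first equation of \Ref{KE-eqns} read $0=\lambda$, so K\"ahler-Einstein with $N=0$ \emph{is} Ricci-flat; nothing extra is needed. The totally-geodesic foliation is the vertical one $\VV=\mathrm{span}\{\kk,\tT\}$, which is totally geodesic by the computation cited at the start of subsection~\ref{leaf2d} (using \Ref{conn}), and its leaves carry the induced metric $\kf^2+\tf^2=\bar g$ by \Ref{GaussC} and the discussion there, so the leaves are isometric to the prescribed $\bar g$ as required. Finally, for the Killing fields: the second factor $\Sigma'$ (the fibre $\mathbb R^2$) contributes, by construction, two commuting coordinate translations $\xx,\yy$ which preserve all the structure functions (they have zero $d_\xx,d_\yy$-derivatives) and hence preserve $g$; this yields a Lie algebra of Killing fields of dimension at least $2$.

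\textbf{Main obstacle.} The delicate point is the passage from ``functions $P,Q,R,S,L$ on $\Sigma$ satisfying \Ref{sys2}'' back to ``an honest Riemannian metric $g$ on a $4$-manifold with a frame realizing the prescribed brackets'' — i.e. verifying that the linear algebraic constraints \Ref{rels1},\Ref{rels2} on $A,\dots,H$ are simultaneously solvable together with the compatibility of $\kk,\tT$ on $\Sigma$ with the fibre frame, and that the resulting $\{e_i\}$ genuinely integrates to a frame of some $g$ (equivalently, that the prescribed connection \Ref{conn} and its $J$-parallel, torsion-free extension is metric). Once the frame exists, Proposition~\ref{kah} does the rest; the bookkeeping is routine but needs care near the zeros of $R$, where $P,Q$ (hence some of the bracket coefficients) are only defined on the complement — so the theorem is genuinely local, stated on an open neighborhood away from $\{R=0\}=\{K_{\bar g}=0\}$, consistent with the statement.
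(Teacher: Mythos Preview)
Your outline correctly identifies the strategy: build $R,L$ from the leaf metric, invoke Lemma~\ref{scnd-order} to obtain $P,Q$ satisfying \Ref{sys2}, then produce a four-dimensional frame realizing \Ref{brack1}--\Ref{rels2} and appeal to Theorem~\ref{PDE-thm}. But the step you flag as the ``main obstacle'' is in fact the entire content of the proof, and your proposed resolution of it is wrong.

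You write that $\xx,\yy$ should be ``coordinate-type vector fields'' on the second factor and later identify them with the ``two commuting coordinate translations'' serving as Killing fields. This cannot work: if $\xx=\partial_u$, $\yy=\partial_v$ and $\kk,\tT$ are lifted trivially from $\Sigma$, then $[\kk,\xx]=[\tT,\xx]=0$, forcing $A=\cdots=H=0$ in \Ref{brack2}--\Ref{brack3}, hence $R=0$ and the leaf is flat. The orthonormal frame fields $\xx,\yy$ are \emph{not} the Killing fields. What the paper does is set
\[
\xx=a\,\partial_u+b\,\partial_v,\qquad \yy=r\,\partial_u+s\,\partial_v,
\]
with $a,b,r,s$ functions of the leaf coordinates $x,y$ only. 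The bracket relations \Ref{brack2}--\Ref{brack3} then become an overdetermined first-order linear PDE system for $(a,b,r,s)$ (equations \Ref{vec-sys}), and the substantive step is checking that its integrability conditions $a_{xy}=a_{yx}$, etc., reduce exactly to \Ref{sys2}ii),iii),v), which have already been arranged on the leaf. Solving for $A,\dots,H$ ``pointwise algebraically'' from $P,Q,R,S$ is trivial and is not the issue; constructing a frame whose Lie brackets \emph{realize} those prescribed coefficient functions is a genuine PDE existence problem, and that is what you have not addressed. The Killing fields of $g$ are $\partial_u,\partial_v$ (because all metric coefficients depend only on $x,y$), not $\xx,\yy$.

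A secondary point: the paper does not work in the flat coordinates of $g_0$ on the leaf but in geodesic parallel coordinates where $\bar g=2(dx^2+c^2\,dy^2)$; this is what makes $\kk-\tT=\partial_x$ have geodesic flow and allows the explicit formulas $L=-c_x/(2c)$, $R^2=-2c_{xx}/c$ that feed into \Ref{vec-sys}.
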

The proof of this theorem will be given in the rest of this section.
Note that we have already shown examples of such metrics in
section 8, with a $3$-dimensional Lie algebra of Killing fields.
However for non-trivial harmonic functions $h$, or more precisely
whenever $\ell^{-1/2}e^{-h}$ depends non-trivially on two coordinate functions,
the dimension will drop to $2$.



\subsection{Coordinates for a leaf-like metric leading to equations \Ref{sys2}}\lb{leaf-coord}
We introduce a coordinate representation for a given leaf metric in preparation for exhibiting
a coordinate representation for $g$.

Let $\bar{g}$ be a leaf-like metric with Gauss curvature $K_{\bar{g}}$ on
an open set in $\mathbb{R}^2$. Since $K_{\bar{g}}>0$, the metric $\bar{g}$ is locally embeddable
in $\mathbb{R}^3$ and hence admits geodesic parallel coordinates.
Choose a ``homothetic" version $x$, $y$ of these coordinates with domain $U_1$, in which the
metric takes the form:
\[
\bar{g}=2(dx^2+c^2dy^2)
\]
for some nowhere vanishing $c=c(x,y)$.
Then $K_{\bar{g}}=-c_{xx}/c$, so define
\be\lb{R-c} R:=\sqrt{-2c_{xx}/c}. \end{equation}
Next, define vector fields $\kk$, $\tT$ by the formulas $\kk-\tT:=\partial_x$, $\kk+\tT:=\fr1c\partial_y$, so that
$\bar{g}=2(\widehat{\kk-\tT}^2+\widehat{\kk+\tT}^2)=\hat\kk^2+\hat\tT^2$.

Now let $L:=-c_x/(2c)$. It is then easy to check that
[\kk,\tT]=L(\kk+\tT), and equation \Ref{sys2}iv) holds. Furthermore, as
$R^2/2=K_{\bar{g}}$ solves \Ref{gauss_eq}, and one can verify
formula \Ref{Lap-leaf} for the Laplacian, it follows that equation
\Ref{r2ord} also holds in the coordinate domain $U_1$.

It thus follows from lemma \ref{scnd-order} that equations \Ref{sys2} hold
there as well, for $S=\pi/4$ and $P$, $Q$ whose formulas in terms of $c$ are

\be
P=\frac 12\big(\log \fr{-c_{xx}}{c}\big)_x+\fr{c_x}c,\qquad
Q=-\fr 1{2c}\big(\log \fr{-c_{xx}}{c}\big)_y.\lb{main-var}
\end{equation}

\subsection{Coordinates for $g$}

For $P$, $Q$, $R$ as in \Ref{main-var}, \Ref{R-c}, set
\begin{align*}
\al&:=R/\sqrt{2},     &&\bet:=Q/2,\\
\nu&:=P/2+R/\sqrt{2}, &&\chi:=-P/2+R/\sqrt{2},\\
\end{align*}
which are smooth functions in $U_1$.

Let $U_2$ be an open set in the plane with coordinates $u$, $v$
so that $M:=U_1\times U_2$ has coordinates $x$, $y$, $u$, $v$.
Define on $M$ vector fields
\begin{align*}
&\kk-\tT=\partial_x, \qquad\kk+\tT=(1/c)\partial_y,\\
&\xx=a\partial_u+b\partial_v, \qquad \yy=r\partial_u+s\partial_v,
\end{align*}
where $c=c(x,y)$ is as in subsection \ref{leaf-coord},
and $a$, $b$, $r$, $s$ are functions of $x$, $y$ which are
solutions to the system
\begin{align}
&a_x=\al a+\bet r, &&r_x=-\bet a-\al r,\nonumber\\
&b_x=\al b+\bet s, &&s_x=-\bet b-\al s,\nonumber\\
&a_y/c=\nu r, &&r_y/c=\chi a,\nonumber\\
&b_y/c=\nu s, &&s_y/c=\chi b.\lb{vec-sys}
\end{align}
We need to verify that solutions to this overdetermined elliptic system exist, and
we will do so via elementary means. Note that the eight equations in \Ref{vec-sys}
decouple in pairs, line by line, and partial integration of each pair is equivalent
to solving at most a second order linear equation in one of the two unknowns of the pair.
For example, for the first line in \Ref{vec-sys}, if $\bet$ is nowhere vanishing
the two equations resolve to $a_{xx}-(\bet_x/\beta)a_x+(\bet(\bet-\al^2)+\al\bet_x/\bet-\al_x)a=0$
and $r=(a_x+\al a)/\bet$. Regarding the former equation as an ODE for fixed $y$, its
smooth coefficients guarantee local existence. Note that $\bet=0$, which is the case where $c$ depends only on $x$, examples of which have been given in section~\ref{ric-flat}, is even simpler
as $a$ and $r$ completely decouple.

Similarly the pair in the third line in \Ref{vec-sys} translates to $a_{yy}-(c\nu)_y/(c\nu)a_y-c^2\chi\nu a=0$ and $r=a_y/(c\nu)$, with a similar remark as above if $\nu$ vanishes
and local existence as an ODE in $y$ is again guaranteed.

The integrability conditions ensuring that the above partial integrations give consistent
solutions to these PDEs are all guaranteed to hold via \Ref{sys2}.
For example, from the first and third lines of
\Ref{vec-sys}, $a_{xy}=(\al_y+c\bet\chi)a+(\bet_y+c\al\nu)r$ and
$a_{yx}=-c\bet\nu a+((c\nu)_x-c\al\nu)r$, so that the mixed partials of $a$
will be equal if
\begin{align*}
\al_y+c\bet\chi=-c\bet\nu,\\
\bet_y+c\al\nu=(c\nu)_x-c\al\nu.
\end{align*}
Unraveling the definitions of these quantities,
the first equation reduces to \Ref{sys2}iii), while
the second equation holds in lieu of \Ref{sys2}ii),v).

We now intend to apply Theorem~\ref{PDE-thm}, so we need to verify
the Lie bracket relations \Ref{brack1}-\Ref{rels2}. Clearly
$\xx$ and $\yy$ commute, whereas $[\kk,\tT]$ was already discussed.
The remaining four relations follow from \Ref{vec-sys}, so long
as the functions $A,\ldots H$ are defined by $\al=A-E=H-D$,
$\bet=B-F=G-C$, $\nu=B+F$, $\chi=C+G$ and $0=A+E=D+H$. It is a routine
matter to check that under these definitions \Ref{rels1}-\Ref{rels2}
also hold, and $P$, $Q$, $R$, $S$ are derived from $A,\ldots H$
in accordance with \Ref{chan-var}. Finally, the domain of $\ta$
is clearly that of $x$, i.e. all of $M$.

By Theorem~\ref{PDE-thm}, \Ref{KE-eqns} holds, and as $N=0$, the metric
\begin{multline*}
g=\hat{\xx}^2+\hat{\yy}^2+2(\widehat{\kk-\tT}^2+\widehat{\kk+\tT}^2)\\
=\fr1{(as-rb)^2}\left((s\,du-r\,dv)^2+(-b\,du+a\,dv)^2\right)+2(dx^2+c^2dy^2).
\end{multline*}
is K\"ahler and Ricci-flat, and has a totally geodesic foliation with leaf metric
$\bar{g}$. Since its coefficients depend on at most two coordinates, the dimension
of the Lie algebra of Killing fields is at least two.
This concludes the proof of Theorem~\ref{g-bar-g}.

\appendix

\section{Outline of the derivation of the ODE and PDE systems}
\subsection{Generalized PDEs}
Suppose one is given a $4$-manifold with a frame $\kk$, $\tT$, $\xx$, $\yy$ satisfying
the Lie bracket relations \Ref{brack1}-\Ref{brack3} for functions $A$, $B$, $C$, $D$, $E$, $F$, $G$, $H$, $L$, $N$ on the frame domain. The dual coframe
$\hat\kk$, $\hat\tT$, $\hat\xx$, $\hat\yy$ then satisfies
\begin{align}
d\kf&=-N\xf\wedge\yf-L\kf\wedge\tf,\nonumber\\
d\tf&=-N\xf\wedge\yf-L\kf\wedge\tf,\nonumber\\
d\xf&=-A\kf\wedge\xf-C\kf\wedge\yf-E\tf\wedge\xf-G\tf\wedge\yf,\nonumber\\
d\yf&=-B\kf\wedge\xf-D\kf\wedge\yf-F\tf\wedge\xf-H\tf\wedge\yf.\lb{d-frame}
\end{align}
The vanishing of $d^2$ on the coframe $1$-forms gives four equations, two of which
are identical. Writing, for example, $dN=d_\kk N\kf+d_\tT N\tf+d_\xx N\xf+d_\yy N\yf$ etc.
and separating components yields $12$ scalar equations
\begin{align}
d_\xv L&=0,\qquad d_\yv L=0,\nonumber\\
d_\yv A&=d_\xv C,\qquad d_\yv B=d_\xv D,\qquad d_\yv E=d_\xv G,\qquad d_\yv F=d_\xv H,\nonumber\\
d_\tv N&=NE+NH+LN,\qquad d_\kv N=NA+ND-LN,\lb{nl}\\
d_\tv A&=d_\kv E-AL+CF-EL-GB,\lb{dta} \\
d_\tv B&=d_\kv F-BL+BE+DF-FL-FA-HB,\lb{dtb} \\
d_\tv C&=d_\kv G+AG-CL+CH-EC-GL-GD,\lb{dtc} \\
d_\tv D&=d_\kv H+BG-DL-FC-HL.\lb{dtd}
\end{align}
Adding and subtracting the two equations \Ref{nl}, the two equations \Ref{dta} and \Ref{dtd}
and the two equations \Ref{dtb}-\Ref{dtc}, while using relations \Ref{rels1}-\Ref{rels2},
yields six equations of which only five are independent. The resulting equivalent system is
\begin{align}
&d_\xv L=0,\qquad d_\yv L=0,\lb{ll}\\
&d_\yv A=d_\xv C,\qquad d_\yv B=d_\xv D,\qquad d_\yv E=d_\xv G,\qquad d_\yv F=d_\xv H,\lb{foursome}\\
&d_{\kv+\tv} N=0,\qquad d_{\kv-\tv} N=2N^2-2LN,\lb{nll} \\
&d_\tv (F+G)=-d_\kv (B+C)-(F+G)L+(B+C)L-2(F+G)B+2(B+C)F,\lb{long1}\\
&d_\kv (F+G)=d_\tv (B+C)+(B+C)L+(F+G)L+F^2-G^2+B^2-C^2,\lb{long2}\\
&d_\tv (B-C)=d_\kv (F-G)-(B-C)L-(F-G)L-(B+C)^2-(F+G)^2.\lb{long3}
\end{align}
Assume now that $M$ admits a K\"ahler metric making our frame orthonormal,
which is additionally Einstein. Then, in addition to the above system,
the six equations \Ref{KE-eqns} reproduced below also hold.
\begin{align}
\lam&=-N(2L+C-H+A-F),\lb{lam1}\\
\lam&=-L(2L+C-H+A-F)+d_{\kv-\tv}L-d_\tv(C-H)+d_\kv(A-F),\lb{lam2}\\
0&=d_\xv(L+C-H),\qquad 0=d_\xv(L+A-F),\lb{other1}\\
0&=d_\yv(L+C-H),\qquad 0=d_\yv(L+A-F).\lb{other2}
\end{align}
At this point our derivation splits into cases.

\subsection{The case $\lam\ne 0$}
If the Einstein constant $\lam$ is nonzero, then by \Ref{lam1}
\be\lb{generic}
\text{$2L+C-H+A-F$ is nowhere vanishing}.
\end{equation}
In that case, \Ref{lam1}, \Ref{other1} and \Ref{other2}
clearly imply
\begin{align}
d_\xx N=0,\qquad d_\yy N=0.\lb{NN}
\end{align}
Additionally, by the second of equations \Ref{brack1}, we have
the following {\em basic fact}: for any
smooth function $f$ on the frame domain,
\[
\text{if $d_\xx f=d_\yy f=0$ then $d_{\kk+\tT}f=0$.}
\]
Thus from \Ref{ll} $d_{\kk+\tT}L=0$. This, in conjunction with
\Ref{ll}, \Ref{other1}, \Ref{other2} and the {\em basic fact} imply in turn
\begin{align}
d_\xv (A-F)&=0,\qquad d_\yv (A-F)=0,\qquad d_{\kv+\tv}(A-F)=0,\nonumber\\
d_\xv (C-H)&=0,\qquad d_\yv (C-H)=0,\qquad d_{\kv+\tv}(C-H)=0.\lb{leftover}
\end{align}
Among these, the third and sixth equations imply
\be\lb{switch}
d_{\kk-\tT}(2L+C-H+A-F)=2(d_{\kk-\tT}L-d_\tT(C-H)+d_\kk(A-F)).
\end{equation}
As we can substitute this in \Ref{lam2} we note the following:
\Ref{lam1} implies both that $N$ is nowhere vanishing, and that
we can replace $2L+C-H+A-F$ with $-\lam/N$. This
then implies that the second equation in \Ref{nll} yields \Ref{lam2}.
We can thus drop \Ref{lam2} from our system.

We now introduce the change of variables \Ref{chan-var} valid at points
of the frame domain where
\[
F+G\ne 0,
\]
and note its inverse.
\begin{align}
B&=[(P+Q)+2R\sin S]/4,\qquad C=[-(P+Q)+2R\sin S]/4,\nonumber\\
F&=[(P-Q)+2R\cos S]/4,\qquad G=[-(P-Q)+2R\cos S]/4.\lb{change}
\end{align}
Since
\be\lb{intermediate}
A-F=(N-F+G)/2,\qquad C-H=(N-B+C)/2
\end{equation}
by \Ref{rels1}-\Ref{rels2}, it follows from \Ref{leftover} and \Ref{NN} that
\be\lb{BCFG}
\text{$d_\xx (F-G)=0$, $d_\yy (F-G)=0$, $d_\xx (B-C)=0$ and $d_\yy (B-C)=0$.}
\end{equation}
Hence
\begin{align*}
d_\xx P&=0,\qquad d_\yy P=0,\qquad d_{\kk+\tT}P=0,\\
d_\xx Q&=0,\qquad d_\yy Q=0,\qquad d_{\kk+\tT}Q=0,
\end{align*}
where the above {\em basic fact} was also employed.

Next we show that the four equations \Ref{foursome} can be replaced by two
equivalent equations in terms of $R$ and $S$. On the one hand,
by \Ref{BCFG}, $d_\yy B=d_\yy(B+C)/2=d_\yy(R\sin S)/2$, $d_\xx C=d_\xx (B+C)/2=d_\xx (R\sin S)/2$,
$d_\yy F=d_\yy(F+G)/2=d_\yy(R\cos S)/2$, $d_\xx G=d_\xx(F+G)/2=d_\xx(R\cos S)/2$.
On the other hand $d_\yy A = d_\yy(R\cos S)/2$, $d_\xx D = -d_\xx(R\cos S)/2$,
$d_\yy E = -d_\yy(R\sin S)/2$, $d_\xx H = d_\xx(R\sin S)/2$, because \Ref{rels1}-\Ref{rels2}
imply $A=(N+F+G)/2$, $D=(N-F-G)/2$, $E=-(N+B+C)/2$, $H=(-N+B+C)/2$.
Thus \Ref{foursome} can be replaced by the two equations
\begin{align}
d_\yv (R\cos S)&=d_\xv (R\sin S),\nonumber\\
d_\yv (R\sin S)&=-d_\xv (R\cos S).\lb{cs-sn}
\end{align}

Finally, equations \Ref{long1}-\Ref{long3} are replaced by
\begin{align}
d_\tv (R\cos S)&=-d_\kv (R\sin S)-RL(\cos S-\sin S)\nonumber\\
&+\frac{1}{2}(P-Q)R\sin S-\frac{1}{2}(P+Q)R\cos S,
\lb{trig1}\\
d_\kv (R\cos S)&=d_\tv (R\sin S)+RL(\sin S+\cos S)\nonumber\\
&+\frac{1}{2}(P-Q)R\cos S+\frac{1}{2}(P+Q)R\sin S,
\lb{trig2}\\
\frac{1}{2}d_\tv (P+Q) &= \frac{1}{2}d_\kv (P-Q)-PL-R^2.\lb{longy}
\end{align}
The justification is straightforward, except for noting that the last two terms of
\Ref{long1} equal $2(CF-GB)$, which, via \Ref{change},
is calculated to equal $[R(P-Q)\sin S-R(P+Q)\cos S]/2$.

So far, we know that the $d_\xx$, $d_\yy$, $d_{\kk+\tT}$ derivatives
of $L$, $N$, $P$ and $Q$ vanish. Our goal now is to prove
the same for $R$ and $S$, and also find the $d_{\kk-\tT}$
derivatives of all these quantities. Looking at \Ref{longy},
as $d_{\kk+\tT}Q=0$, it becomes
\be\lb{PP}
\fr 12d_{\kk-\tT}P=PL+R^2.
\end{equation}
Applying $d_\xv$, $d_\yv$, and $d_{\kv+\tv}$ to this equation, and employing the Lie bracket relations
\Ref{brack1}-\Ref{brack3}, we find that
\begin{align*}
d_\xv R&=0,\qquad d_\yv R=0,\qquad d_{\kv+\tv}R=0.
\end{align*}
Using this we see from \Ref{cs-sn} that, as $R\ne 0$ under our assumptions, we have
\begin{align*}
d_\xv S&=0,\qquad d_\yv S=0,\qquad d_{\kv+\tv}S=0.
\end{align*}

Now \Ref{trig1}-\Ref{trig2} can be converted to the form
\begin{align*}
\al=-\beta\tan S,\qquad \beta=\alpha\tan S,
\end{align*}
for certain expressions $\al$, $\beta$ which thus vanish. Their
vanishing is equivalent to the equations
\begin{align}\lb{R-S}
d_\tv R &= -R d_\kv S - RL - \frac{1}{2}(P+Q)R,\qquad
d_\kv R = R d_\tv S + RL + \frac{1}{2}(P-Q)R.
\end{align}
As $d_{\kv+\tv}R=0$ and $d_{\kv+\tv}S=0$, adding and subtracting these
to the above gives
\be\lb{SR}
d_{\kv-\tv}S = -Q,\qquad d_{\kv-\tv} R =  2RL + PR.
\end{equation}
Now from \Ref{grad} we have $d\tau=\kf-\tf$ in an open set $V$, and
$d_{\kk-\tT}\ta=2$ due to the metric values on $\kk$ and $\tT$.
Thus $L$, $N$, $P$, $Q$, $R$ and $S$ are functions of $\tau$ in the sense of section \ref{OdE}.
Employing the abuse of notation described there, the second of equations \Ref{nll}
along with equations \Ref{PP}  and \Ref{SR} show that ODEs in \Ref{sys} hold for
$N'$, $P'$, $R'$ and $S'$.

As $N$ is nowhere vanishing, the only remaining independent
equation is \Ref{lam1}, which in the variables \Ref{chan-var} is written,
with the help of the definition of $P$ and \Ref{rels1}-\Ref{rels2}, in the
form
\be\lb{lam-end}
2\lam=-N(4L+2N-P).
\end{equation}
However, differentiating \Ref{lam-end} with respect to $\ta$, then replacing $N'$
by its expression from \Ref{sys} and simplifying yields, as $N$ is nowhere
vanishing,  the equation for $L'$ in \Ref{sys}. This concludes the case
$\lam\ne 0$.

\subsection{The case $\lam=0$, $N=0$}
If $\lam=0$, by $\Ref{lam1}$, at each point either \Ref{generic} does not hold,
or $N=0$. We assume the latter everywhere:
\[
\text{$N=0$ on $V\cap\{F+G\ne 0\}$.}
\]
Using \Ref{intermediate}, which still holds, when $N=0$  equation \Ref{lam2} can be written in the form
\[
-L(2L-P/2)+d_{\kk-\tT}L+d_\tT(B-C)/2-d_\kk(F-G)/2=0.
\]
Applying \Ref{long3} and \Ref{chan-var} gives \Ref{sys2a}iii).
Applying $d_\xx$ and $d_\yy$ to the latter and using \Ref{ll} gives $d_\xx R=0$, $d_\yy R=0$.
As the passage from \Ref{foursome} to \Ref{cs-sn} is still valid, the latter gives the vanishing
of $d_\xx S$ and $d_\yy S$. As the $d_\xx$, $d_\yy$ parts of \Ref{leftover} still hold,
they lead as before to the vanishing of the $d_\xx$ and $d_\yy$ derivatives of $P$ and $Q$.

Of equations \Ref{ll}-\Ref{other2}, that always hold, the ones whose consequences have not yet
been explored are \Ref{long1}-\Ref{long3}, which translate in the variables \Ref{chan-var} to
\Ref{trig1}-\Ref{longy}. Of the latter equations, the first two translate as before
to \Ref{R-S}, and adding and subtracting these gives \Ref{sys2a}i),ii).
Whereas \Ref{longy} is equivalent \Ref{sys2a}iv).

After the rotation in $\HH$ that gives $S=\pi/4$, the equations of \Ref{sys2a} turn into
those of \Ref{sys2}. Conversely, on $(M,g)$ with \Ref{brack1}-\Ref{rels2} and $N=0$,
starting from \Ref{sys2} or its general $S$ form \Ref{sys2a}, along with the assumption on the
vanishing of $d_\xx$ and $d_\yy$ on $P,Q,R,S,L$, one easily checks that the above steps are reversible and lead to equations \Ref{lam1}-\Ref{other2} with $\lam=0$, i.e. to \Ref{KE-eqns}, so that the metric is Ricci flat.

\end{document}